\newtheorem{theorem}{Theorem}[section]
\newtheorem{lemma}[theorem]{Lemma}
\newtheorem{proposition}[theorem]{Proposition}
\newtheorem{corollary}[theorem]{Corollary}
\newtheorem{rmk}[theorem]{Remark}
\theoremstyle{definition}
\newtheorem{definition}{Definition}[section]
\newcommand{\tab}{\hspace{3ex}}
\newcommand{\coker}{\text{coker\,}}
\newcommand{\im}{\text{im\,}}
\def\bC{\mathbb{C}}
\def\bZ{\mathbb{Z}}
\DeclareMathOperator{\Hom}{Hom}
\DeclareMathOperator{\Rep}{Rep}
\DeclareMathOperator{\End}{End}
\DeclareMathOperator{\irRC}{\operatorname{Irr}(\mathcal C)}
\newcommand{\mcA}{\mathcal{A}}
\newcommand{\mcB}{\mathcal{B}}
\newcommand{\mcC}{\mathcal{C}}
\newcommand{\mcD}{\mathcal{D}}
\newcommand{\mcI}{\mathcal{I}}
\newcommand{\mcK}{\mathcal{K}}
\newcommand{\mcN}{\mathcal{N}}
\newcommand{\mcP}{\mathcal{P}}
\newcommand{\mcS}{\mathcal{S}}
\newcommand{\mcT}{\mathcal{T}}
\newcommand{\mbbC}{\mathbb{C}}
\newcommand{\mbbZ}{\mathbb{Z}}
\title{Towards reconstruction of finite tensor categories}
\date{}
\author[ucsb]{Mitchell Jubeir\textsuperscript{(1)}}
\email{mitchell$\_$jubeir@ucsb.edu}
\author[ucsb]{Zhenghan Wang\textsuperscript{(1)}}
\email{zhenghwa@math.ucsb.edu}
\address[1]{Department of Mathematics, University of California, Santa Barbara, CA 93106, USA}
\begin{document}
\begin{abstract}
We take a first step towards a reconstruction of finite tensor categories using finitely many $F$-matrices.  The goal is to reconstruct a finite tensor category from its projective ideal.  Here we set up the framework for an important concrete example---the $8$-dimensional Nicholas Hopf algebra $K_2$.  Of particular importance is to determine its Green ring and tensor ideals. The Hopf algebra $K_2$ allows the recovery of $(2+1)$-dimensional Seiberg-Witten TQFT from Hennings TQFT based on $K_2$.  This powerful result convinced us that it is interesting to study the Green ring of $K_2$ and its tensor ideals in more detail. Our results clearly illustrate the difficulties arisen from the proliferation of non-projective reducible indecomposable objects in finite tensor categories.  

\end{abstract}
\maketitle
            
\section{Introduction}

Fusion categories and modular tensor categories are two well-studied classes of algebraic structures with wide interests from mathematics, physics, and quantum computing (see e.g. \cite{rowell2018mathematics, wang2010topological, nayak2008non}).  Non-semisimple generalizations to finite tensor categories and non-semisimple modular categories are expected to be interesting with connections to physics and quantum computing as well.  Invariants such as the modular $S$ and $T$ matrices, and finite presentations of fusion categories and modular tensor categories using $F$-matrices and $R$-matrices are crucial to their classification and applications (see e.g. \cite{rowell2009classification, ng2023reconstruction,levin2005string}).  We are interested in a generalization of modular data to non-semisimple modular categories and a reconstruction of finite tensor categories with finitely many $F$-matrices or $6j$-symbols.  Modular data of non-semisimple modular categories is recently investigated in \cite{chang2024modular}.  In this paper, we start to investigate how to present finite tensor categories by skeletalizing sub-categories which are complete invariants.  

A salient feature of non-semisimple Hopf algebras is the proliferation of reducible indecomposable modules that are not projective.  As we will see in Sec. \ref{sec: K2} that for the $8$-dimensional Nicholas Hopf algebra $K_2$, its indecomposable modules (up to isomorphisms) include a continuous family parameterized by the $2$-sphere.  Therefore, it is not even clear whether or not that finite tensor categories can be presented by a finite set of matrices as in the case of fusion categories.  We prove that the full subcategory of projectives is a complete invariant of a finite tensor category.  Hence to present a finite tensor category, it suffices to do so for its projective subcategory.  This in principle reduces the presentation, invariants, classification, and open problems of finite tensor categories to these of their projective subcategories. Note that the projective subcategory of a fusion category is itself. 

We will show that indeed finite tensor categories can be presented with finitely many matrices in a future publication. The analogue of $F$-matrices and $F$-symbols for a finite tensor category will be the analogous quantities for the indecomposable projecitve objects as a non-unital tensor category.  It remains to skeletalize such non-unital finite tensor categories by parameterizing pentagons with $F$-matrices. The projective fusion rules in general have very high multiplicities, therefore it would be in general much more challenging to solve the pentagons.  We will leave the explicit skelelatization, reconstruction, and its applications to a future publication.  In this paper, we content ourselves with the foundations and preparation for the example $K_2$.  

A more natural choice seems to be the quasi-dominated subcategory of a finite tensor category \cite{turaev2010quantum}, which includes both simple and projective objects.  The inclusion of simple objects makes an explicit reconstruction more transparent.  But it turns out that this subcategory is still unwieldy: in the case of $r(K_2)$, its quasi-dominated subcategory contains uncountably many indecomposable modules.

One preparation work for the example $K_2$ is a detailed study of its Green ring and tensor ideals.  The $8$-dimensional Nicholas Hopf algebra $K_2$ allows the recovery of $(2+1)$-dimensional Seiberg-Witten TQFT from Hennings TQFT based on $K_2$ \cite{kerler2003homology}.  This powerful result convinced us to study the Green ring of $K_2$ in more detail. One major goal is to determine the tensor ideals of the Green ring $r(K_2)$.  These results clearly illustrate the difficulties arisen from the proliferation of non-projective reducible indecomposable objects in finite tensor categories. In the future, we plan to classify all finite tensor categories with the same projective fusion rules.

In Sec. \ref{sec: nonunital}, we recall the Nicholas Hopf algebras $K_m$ and determine their projective Auslander algebras.  In Sec. \ref{sec: sub}, we prove that both the projective and quasi-dominated subcategories are complete invariants of finite tensor categories.  We also describe the categorical reconstruction of finite tensor categories from their projective subcategories. In Sec. \ref{sec: nicholas}, we show how to recover the Green ring of $K_{2m}$ if the Green ring of the double $DK_m$ is known. In Sec. \ref{sec: K2}, we determine the Green ring of $K_2$ and its tensor ideals.

\subsection{Notation and terminology}

We work over the field of complex numbers $\mathbb{C}$, so all vector spaces and algebras are finitely dimensional over $\mathbb{C}$.  Many results hold for general algebraically closed fields of characteristic zero.

\begin{enumerate}
    \item $\mcP_\mcC$=the projective subcategory of a finite tensor category $\mcC$.
    \item $A_\mcC$=the projective Auslander algebra of a finite tensor category $\mcC$
    \item $\mcN_\mcC$=the set of negligible objects of a  a finite tensor category $\mcC$
    \item $\mcS_\mcC$=the set of simple objects of a finite tensor category $\mcC$
    \item $\overline{S}^{\textrm{AC}}$=the additive subcategory whose objects are finite direct sums of elements of $S$ where $S$ is a subset of the FTC $\mcC$ known as the finite additive completion of $S$
    \item $\overline{\mcP_\mcC}^{\textrm{Ab}}$=the Abelian completion of the projective subcategory $\mcP_\mcC$

\end{enumerate}

\section{Non-unital tensor categories}\label{sec: nonunital}

A finite tensor category (FTC) $\mcC$ is a generalization of fusion category to the non-semisimple setting and the categorical counterpart of a finite-dimensional weak quasi-Hopf algebra.  We strictly follow the definition in \cite{EGNO,etingof2004finite}. In particular, an FTC is rigid, and every simple object $X$ has a unique projective cover $P(X)$ up to isomorphism.  Moreover, each projective indecomposable object covers a simple object called its socle. 
An FTC is unimodular if the projective cover $P_1$ of the tensor unit is self-dual.

Given an FTC $\mathcal{C}$ or the representation category $\Rep(H)$ of a Hopf algebra $H$, the full subcategory of projective indecomposable objects/modules lacks a monoidal unit of the tensor product in general.  In the semi-simple case, every simple object is also projective.    Therefore, it is interesting to study tensor categories without a tensor unit. We will call such a tensor category in the sense of \cite{etingof2004finite} without a tensor unit {\it a non-unital tensor category}.

Projective objects of an FTC $\mcC$ actually form a tensor ideal $\mcP_\mcC$ of $\mcC$.  Every FTC $\mcC$ has two trivial tensor ideals: the projetive $\mcP_\mcC$ and itself $\mcC$.  It is interesting and difficult to understand the complete set of intermediate tensor ideals between $\mcP_\mcC$ and $\mcC$ and their modified traces.

\subsection{Fusion rings, tensor ideals, and projective Auslander algebras}

\subsubsection{Green rings $r(\mcC)$ and projective fusion rings $K_0(\mathcal C)$}
Given an FTC $\mcC$, there are several different fusion rings and modules of $\mcC$ (see e.g. \cite{chang2024modular}).  The most complete one is the Green ring $r(\mcC)$.  In the case of a Hopf algebra $H$, the Green ring is simply the representation ring $\Rep(H)$ of $H$.  If $H$ is non-semisimple, then there are potentially uncountably many indecomposable modules of $H$.  

For an FTC $\mcC$, the Green ring $r(\mcC)$ is the fusion ring of all its indecomposable objects: simple objects, projective indecomposable objects, and indecomposable objects.  Equivalently, it can be defined as the representation ring of a weak quasi-Hopf algebra $H$ such that $\mcC=\Rep(H)$.  Therefore, the complexity for representation rings of non-semisimple Hopf algebras directly translates into that of FTCs. 

Another important fusion ring for us in this paper is the projective fusion ring.  Our goal is to encode all the information of an FTC by its projective subcategory $\mcP_\mcC$, and present $\mcP_\mcC$ by a finite set of matrices.

An important invariant of an FTC $\mcC$ is its projective Auslander algebra that we will define in this section.  The finitely dimensional projective Auslander algebra is a sub-algebra of the Auslander algebra, which is infinitely dimensional in general.

There is a related approach to the reconstruction of finite tensor categories \cite{chen2020reconstruction} under some finite conditions, which uses the Auslander algebra and the pentagons of the Green ring.  Instead, we will focus on the more manageable finite projective subcategories.   

The Green ring of a Hopf algebra is much more complicated than its Grothendieck ring because non-split exact sequence of representations leads to non-trivial relations.  The same applies to finite tensor categories.  The computation of the Green ring of a Hopf algebra is very challenging.  We mention a few known cases that are relevant to us here.

The Green rings of the Taft algebras are determined in \cite{chen2014greenTafts}.  Since the Sweedler Hopf algebra $K_1$ is a Taft algebra, it follows that the Green ring $r(K_1)$ of the Sweeder Hopf algebra is $r(K_1)=\mathbb{Z}[x,y]/I$, where $I$ is the ideal generated by $x^2-1$ and $y^2-y-xy$ \cite{chen2014greenTafts}.  The Green ring of the Drinfeld double of the Sweedler Hopf algebra is obtained in \cite{chen2014green}.  The Green rings of $K_m$ for $m\geq 2$ is unknown.  One of our results is the determination of the Green ring $r(K_2)$ of $K_2$.  We deduce this result from a quotient map from $DK_1$ to $K_2$ and the Green ring of the Drinfeld double of $K_1$ \cite{chen2014green}.

The projective sub-ring $K_0(\mathcal{C})$ of the Green ring is very important for this paper, so we recall its definition here.

\begin{definition} Let $K_0(\mathcal C)$ be the abelian group generated by the isomorphism classes $[P]$ of projective objects $P$ modulo the relations $[P \oplus Q]=[P]+[Q]$.  The multiplication on $K_0(\mathcal C)$ is defined as $[P] \cdot[Q]=[P \otimes Q]$ for any projective objects $P$ and $Q$ in $\mathcal C$. 

\end{definition}

In general, $K_0(\mathcal{C})$ can be a ring without a unit (rng).

\subsubsection{Tensor ideals}

For applications to quantum invariants of links and topological quantum field theories, we are interested in tensor ideals with modified traces in finite tensor categories.

\begin{definition}

Given an FTC $\mcC$, 
    \begin{enumerate}
\item A subcategory $\mcI \subseteq \mcC$  is called a weak tensor ideal if
for any $V \in \mcC$  and $X\in \mcI$, $V\otimes X\in \mcI$.
\item A weak tensor ideal  $\mcI \subseteq \mcC$ is called a tensor ideal if for any pair of objects $Y_1, Y_2$ in $\mcC$ such that $Y_1\oplus Y_2 \in \mcI$, then $Y_1,Y_2\in \mcI$.
    \end{enumerate}
\end{definition}

\subsubsection{Projective Auslander algebras}

Given an FTC $\mcC$ and a full list $\{P_i\}_{i\in I}$ of its projective indecomposable module representatives, we define the projective Auslander algebra $A_\mcC$ to be the algebra $A_\mcC=\Hom (P,P),$ where $P=\oplus_{i\in I}P_i$ with the composition of morphisms as the algebra multiplication.

For a fusion category, the projective Auslander algebra has no essentially new information than the vector space as it is a direct sum of $1$-dimensional algebras $\mbbC$.  But for general FTCs $\mcC$, their projective Auslander algebra can be non-commutative and very interesting as we see here for $\Rep(K_m)$.

For the Nicholas Hopf algebra $K_m$, their projective Auslander algebras are Hopf algebras.  It would be interesting to know when this is the case in general.

\subsection{Nichols Hopf algebras, their projective fusion rings and Auslander algebras}

\subsubsection{Nichols Hopf algebras $K_m$}

Let $m\in \mathbb{Z}_+$ be a positive integer. The Nichols Hopf algebra $K_m$ is the $2^{m+1}$-dimensional complex Hopf algebra with algebra generators $K, \xi_1, \dots, \xi_m$ and relations:

\begin{equation}\label{Kn-pres}
 K^2 = 1,\hspace{1em} \xi_i^2 = 0,\hspace{1em}K\xi_i =-\xi_i K,\hspace{1em}\xi_i\xi_j=-\xi_j\xi_i \hspace{1em}
\textrm{for}\; i, j=1,\dots,\hspace{1em} m\;\;\;
\textrm{and}\; i\neq j. 
\end{equation}

The comultiplication $\Delta:K_m \to K_m \otimes K_m$, counit $\epsilon:K_m \to\bC$ and antipode $S:K_m \to K_m$ are given as follows: 

\begin{align*}
\Delta(K) &= K\otimes K, & \epsilon(K) &= 1, & S(K) &= K,\\
\Delta(\xi_i) &= K\otimes \xi_i + \xi_i\otimes 1, & \epsilon(\xi_i) &= 0, & S(\xi_i) &= -K\xi_i.
\end{align*}

Note that $\mcK_1$ is the 4-dimensional Sweedler Hopf algebra, and the Nichols Hopf algebra $K_m$ is the crossed product of the exterior algebra of an $m$-dimensional complex vector space $E$ with the $\bZ/2\bZ$-group algebra $K_m \cong\Lambda^* E\rtimes \bC[\bZ/2\bZ]$.   The exterior algebra $\Lambda^* E$ itself is not a Hopf algebra.

\subsubsection{Finite tensor categories $\Rep(K_m)$ and their projective ideals} \label{sec: Projectives of Kn}

The representation categories $\Rep(K_m)$ of the Nicholas Hopf algebra $K_m$ are interesting non-semisimple FTCs.
There are two non-isomorphic irreducible $K_m$-modules $\{V(0), V(1)\}$ of dimension=1, and each has a projective cover $\{P(0), P(1)\}$ of dimension=$2^{m}$. Thus, the total rank of $\Rep(K_m)$ is 4 for all $m\geq 1$. 
Their full fusion rules are 
$$V(i)\otimes V(j)=V(i+j),\;\;\; V(i)\otimes P(j)=P(i+j) $$
$$P(i)\otimes P(j)=2^{m-1}P(0)\oplus 2^{m-1}P(1)$$ for all $i,j=0,1$ and the addition is mod 2.
 
The projective modules of $\Rep(K_m)$ form a full subcategory, which is a tensor ideal. 

\subsubsection{Projective Auslander algebras of $K_m$}

\begin{proposition}
The projective Auslander algebra of $\Rep(K_m)$ is isomorphic as an algebra to $K_m$ for each $m\geq 1$.
\end{proposition}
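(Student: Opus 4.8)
The plan is to compute the projective Auslander algebra $A_{\Rep(K_m)} = \Hom(P,P)$ explicitly, where $P = P(0) \oplus P(1)$, and to exhibit an explicit algebra isomorphism $A_{\Rep(K_m)} \cong K_m$. First I would set up the relevant modules concretely. Since $K_m \cong \Lambda^* E \rtimes \bC[\bZ/2\bZ]$, the regular representation $K_m$ as a left module over itself decomposes as $P(0) \oplus P(1)$, because there are exactly two simple modules $V(0), V(1)$, each appearing in the head of a projective cover of dimension $2^m$, and $\dim K_m = 2^{m+1} = 2\cdot 2^m$. The action of the grouplike $K$ splits $K_m$ into its $\pm 1$ eigenspaces, and this is precisely the decomposition into $P(0)$ and $P(1)$. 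So I would take $P$ to be the regular module itself, which makes the computation of $\Hom(P,P)$ especially clean.

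The key observation I would exploit is the standard fact that for any finite-dimensional algebra $A$, the endomorphism algebra of the left regular module satisfies $\End_A({}_A A) \cong A^{\mathrm{op}}$, with an $A$-module endomorphism of ${}_A A$ being given by right multiplication by an element of $A$. Thus $A_{\Rep(K_m)} = \End_{K_m}(K_m) \cong K_m^{\mathrm{op}}$ as algebras. To finish I would need to identify $K_m^{\mathrm{op}}$ with $K_m$, i.e.\ produce an algebra anti-automorphism of $K_m$. The antipode $S$ is the natural candidate: $S$ is always an algebra anti-homomorphism, and one checks from the presentation \eqref{Kn-pres} that $S$ is bijective (indeed $S^2$ is an inner automorphism for these Hopf algebras, and $S$ itself is invertible since $K_m$ is finite-dimensional). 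Composing the canonical isomorphism $\End_{K_m}(K_m) \cong K_m^{\mathrm{op}}$ with $S \colon K_m^{\mathrm{op}} \to K_m$ yields the desired algebra isomorphism $A_{\Rep(K_m)} \cong K_m$.

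There is one point requiring care. The definition of $A_\mcC$ uses a chosen full list $\{P_i\}$ of projective indecomposable representatives and sets $P = \oplus_i P_i$, with each $P_i$ appearing with multiplicity one; the regular module ${}_{K_m}K_m$ may a priori contain the $P_i$ with higher multiplicities. So I would first verify that each $P(i)$ occurs exactly once in ${}_{K_m}K_m$. This follows from the multiplicity-one statement above (dimension count: $2 \cdot 2^m = 2^{m+1}$ forces each of the two indecomposable projectives to appear exactly once), so in fact ${}_{K_m}K_m \cong P(0)\oplus P(1) = P$ with no extra multiplicity, and the endomorphism-algebra identification applies directly without the usual matrix-amplification.

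I expect the main obstacle to be the careful verification that the two indecomposable projectives $P(0), P(1)$ each appear with multiplicity exactly one in the regular representation, together with explicitly checking that $S$ restricts to a genuine algebra isomorphism $K_m^{\mathrm{op}} \xrightarrow{\sim} K_m$ compatible with the relations in \eqref{Kn-pres}. The relation $K\xi_i = -\xi_i K$ and the sign in $S(\xi_i) = -K\xi_i$ must be tracked carefully so that anti-multiplicativity of $S$ combines correctly with the opposite multiplication. Everything else is a routine bookkeeping of the standard endomorphism-ring identification for the regular module.
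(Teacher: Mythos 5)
Your proposal is correct, but it takes a genuinely different route from the paper. The paper's proof is a hands-on computation: it uses rigidity and the fusion rules to show $\dim \Hom_{K_m}(P_r,P_s)=2^{m-1}$, exhibits explicit generators $\iota_r,\phi^i_r$ of the endomorphism algebra, works out their relations, and then defines an explicit isomorphism $F$ sending $\iota_r\mapsto e_r$ and $\phi^i_r\mapsto \xi_i e_r$. You instead argue structurally: since both simples are $1$-dimensional and $\dim P(0)+\dim P(1)=2^{m+1}=\dim K_m$, the regular module ${}_{K_m}K_m$ is exactly $P(0)\oplus P(1)$ with multiplicity one (i.e.\ $K_m$ is basic), so the projective Auslander algebra is $\End_{K_m}({}_{K_m}K_m)\cong K_m^{\mathrm{op}}$, and the antipode $S$ (bijective, with $S^2=\mathrm{Ad}_K$) converts $K_m^{\mathrm{op}}$ into $K_m$. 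This is shorter and more conceptual, and it actually proves a more general statement: the projective Auslander algebra of $\Rep(H)$ is isomorphic to $H$ for \emph{any} finite-dimensional basic Hopf algebra (or basic algebra admitting an anti-automorphism), with no computation specific to $K_m$. What the paper's computational approach buys instead is explicit data---bases, generators, and structure constants for $\Hom(P_r,P_s)$---which is in line with the paper's stated goal of skeletalizing the projective subcategory. One minor imprecision in your write-up: the decomposition $K_m=P(0)\oplus P(1)$ is given by the right multiplication by the idempotents $e_0=\tfrac{1+K}{2}$, $e_1=\tfrac{1-K}{2}$ (right multiplication commutes with the left module structure); the eigenspaces of the \emph{left} action of $K$ are not submodules and mix $P(0)$ and $P(1)$. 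This does not affect your argument, since the multiplicity-one claim is carried entirely by the dimension count.
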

\begin{proof}
Let $P_0$ and $P_1$ denote $P(0)$ and $P(1)$ respectively for $K_m$.  Let $A$ denote the Auslander algebra: $A=\End_{K_m}(P_0 \oplus P_1)$. Let $e_0=\frac{1+K}{2}$ and $e_1=\frac{1-K}{2}$. Let $W$ denote the set of elements of $K_m$: $\xi_{i_1}\xi_{i_2} \cdots \xi_{i_k}$ with $1\leq i_1< i_2 \cdots <i_k$ for $0\leq k\leq m$. Then $We_i$ forms a basis for $P_i$. Let $p(n)$ denote the parity function $p(n)=n$ mod $2$. Then, note that for $r,s\in \{0,1\}$, as vector spaces, \begin{align*}
    \hom_{K_m}(P_r, P_s) & \simeq \hom_{K_m}(P_r\otimes P_s^*, 1)\\
    &\simeq\hom_{K_m} (2^{m-1} P_0 \oplus 2^{m-1} P_1,1)\\
    &\simeq\hom_{K_m} (2^{m-1} P_0, 1),
\end{align*} which is dimension $2^{m-1}$. Further, note that $\phi\in \hom_{K_m}(P_r, P_s)$ is entirely determined by the image of $e_r$ since $\phi(we_r)= w\cdot \phi(e_r)$. Further, note that for a map $\phi: P_r \to P_s$, \begin{align*}
    \phi(e_r)&= \sum\limits_{w\in W} \lambda_w w\cdot e_s
\end{align*}
but since $e_r \cdot e_r=e_r$, then 
\begin{align*}
    \sum\limits_{w\in W} \lambda_w w\cdot e_r &=\sum\limits_{w\in W} \lambda_w e_r \cdot (w\cdot e_s)\\
    &= w\cdot (e_{r+ p(|w|)}) e_s,
\end{align*} then for $r=s$, the odd length terms must be zero and for $r=s+1$, the even length terms are zero. 
Thus, there are $\sum\limits_{i=1}^{\lfloor \frac{m}{2} \rfloor} \begin{pmatrix}
    m\\
    2i
\end{pmatrix}= 2^{m-1}$ possible non-zero coefficients, so each must be possible by the dimension of the space of algebra homomorphisms. Further, for $w\in W$, let $\phi^w_r: P_r \to P_{r+p(|w|)}$ denote the map $e_r \mapsto we_{r+p(|w|)}$ and let $\phi_r^i: P(r) \to P(1+r)$ where $e_r\mapsto \xi_i e_{1+r}$. First, note that the set $\{\phi^w_r\}$ generate all possible maps from $P_r \to P_s$, for fixed $r$ and either choice of $s$, so the Auslander algebra is generated by the set $\{\phi_r^w\}$ with $r\in \mbbZ/2\mbbZ$ and $w\in W$. However, for non-empty word, note each $\phi_r^w$ is generated as a product $$\phi_r^w = \phi_{r + p(|w|)}^{w_1} \cdots \circ \phi_{r+1}^{w_{|w|-1}} \circ \phi_r^{w_{|w|}}$$ where $w_k$ is the $k$th term in $w$. Let $\iota_r$ denote $\phi_r^{\emptyset}$. Thus, the set $\{\phi_r^i, \iota_r \}$ with $r\in \{0,1\}, w\in W$ generate $A$. The product relations of $A$ are \begin{align*}
    \phi^i_r \phi_{1+r}^i=0 \tab \phi_r^i \phi_r^j=0 \tab \phi^i_r \phi^j_{1+r} = -\phi_{r}^j \phi^i_{1+r} \tab \phi_r^i \iota_r = \phi^i_r = \iota_{1+r} \phi^i_r\\
    \iota^2_r=\iota_r \tab \iota_r \iota_{1+r}=0 \tab \iota_r \phi^i_r= 0 = \phi^i_r \iota_{1+r},
\end{align*} which generates a $2^{m+1}$ dimensional algebra with complex basis given by $e_0, e_1, \phi_r^w$ where $w\in W$ and $r\in \{0,1\}$.\\
Define the map $F:A \to K_m$ by $F(\iota_r)= e_r$ and $F(\phi^i_r)=\xi_i e_r$. First, note that each of the above relations in $A$ are satisfied by the images in $K_m$ since $e_r \xi_i= \xi_i e_{1+r}$. Thus, $F$ induces an algebra homomorphism from $A\to K_m$. Also, for each $w\in W$, $F(\phi_{0}^w + \phi_1^w)=w$ and $F(\phi_{0}^w-\phi_{1}^w)=Kw$, so the map is surjective. Further, since both spaces are of the same dimension, it must be an isomorphism of algebras. 
\end{proof}

\section{Subcategories as complete invariants and reconstruction}\label{sec: sub}

There are in general continuous families of indecomposable objects in FTCs (see e.g. \cite{chen2014green} and Sec. \ref{sec: K2}), hence in general it is challenging to present an FTC with a finite set of matrices.  Our strategy is to look for subcategories of FTCs which are complete invariants and amenable to a presentation with finitely many matrices.  Then the full FTC can be reconstructed in a unique way up to equivalence from such a full subcategory. A natural choice seems to be the quasi-dominated sub-category \cite{turaev2010quantum, chang2024modular}, but unfortunately such categories can also be unwieldy with uncountably many indecomposable modules (see Sec. \ref{sec: K2}).  Instead we realize that the full projective subcategory is already a complete invariant, even though it makes the reconstruction of the FTC explicitly harder as the simple objects are only implicitly given. 

\subsection{Full projective subcategory as a complete invariant}  

Given an abelian category $\mcC$ with enough projectives, we will show that the full projective subcategory $\mcP_\mcC$ of $\mcC$ is a complete invariant of $\mcC$ in the sense that $\mcC$ is uniquely determined by $\mcP_\mcC$ up to equivalence.

\subsubsection{Full projective subcategory of an abelian category with enough projectives} 

For a finite tensor category $\mcC$, let $\mcP_\mcC$ denote its full projective subcategory. Since $\mcP_\mcC$ need not be unital nor abelian, we wish to consider a category with objects of this form. Thus, let $\mcB_R$ denote the category with objects additive tensor subcategories of finite tensor categories with maps right exact tensor functors up to natural equivalence. Let $\mcA$ denote the full subcategory of $\mcB_R$ consisting of finite tensor categories. \\
More generally, suppose $\mcC$ is an abelian category with enough projectives and possibly some additional structures, and let $\mcP_\mcC$ denote its full subcategory of projectives. Let $\mcB_R$ be the category with objects additive categories with the same structure and morphisms natural equivalence classes of right exact functors preserving this structure, and let $\mcA$ denote the full subcategory of $\mcB_R$ with abelian objects.   \\
\begin{lemma}  \cite[Proposition 1.2]{RepinAbelianCat}
Given a category $\mcC$ in $\mcA$ with projective subcategory $\mcP_\mcC$, let $\iota$ be the inclusion of $\mcP_\mcC$ into $\mcC$.
Then, $(\iota, \mcC)$ is the localization (or reflection) of $\mcP_\mcC$ in $\mcA$.
\end{lemma}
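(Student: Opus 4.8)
The plan is to verify directly the universal property that characterizes $(\iota,\mcC)$ as a \emph{reflection} of $\mcP_\mcC$ along the inclusion $\mcA \hookrightarrow \mcB_R$. Concretely, for every abelian object $\mcD$ of $\mcA$ I want to show that precomposition with $\iota$ induces a bijection on natural-equivalence classes
$$\iota^{*}\colon \Hom_{\mcB_R}(\mcC,\mcD)\longrightarrow \Hom_{\mcB_R}(\mcP_\mcC,\mcD),\qquad G\mapsto G\circ\iota ,$$
so that every structure-preserving additive functor $F\colon \mcP_\mcC\to\mcD$ factors through $\iota$ by a right exact functor $\mcC\to\mcD$ that is unique up to natural equivalence. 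This is exactly the statement that $\iota$ is the unit of the reflector, and it is where the hypothesis that $\mcC$ has enough projectives is used.

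First I would prove injectivity of $\iota^{*}$ (the uniqueness half). Given right exact $G,G'\colon\mcC\to\mcD$ with $G\iota\cong G'\iota$, I pick for each $X\in\mcC$ a projective presentation $P_1\xrightarrow{d}P_0\to X\to 0$. Right exactness forces $G(X)\cong\coker(G(d))$ and $G'(X)\cong\coker(G'(d))$; since $P_0,P_1$ lie in $\mcP_\mcC$, the equivalence $G\iota\cong G'\iota$ supplies compatible isomorphisms $G(P_i)\cong G'(P_i)$ intertwining $G(d)$ with $G'(d)$, and the induced map on cokernels is the desired isomorphism $G(X)\cong G'(X)$. Naturality in $X$ then follows by lifting any morphism $X\to X'$ to a chain map of the chosen presentations.

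Next I would construct the extension (the existence half). Given additive structure-preserving $F\colon\mcP_\mcC\to\mcD$, I define $G$ on objects by $G(X):=\coker\bigl(F(P_1)\xrightarrow{F(d)}F(P_0)\bigr)$ for a chosen presentation of $X$, and on morphisms by lifting to chain maps, applying $F$, and passing to cokernels; taking the presentation with $P_0=P$, $P_1=0$ recovers $G(P)\cong F(P)$, hence $G\iota\cong F$. Right exactness of $G$ would follow by feeding a short exact sequence in $\mcC$ into a horseshoe-type choice of compatible presentations and invoking right exactness of $\coker$ in $\mcD$. The monoidal structure is handled in parallel: since $\mcP_\mcC$ is a tensor ideal and $\otimes$ is right exact in each variable, the tensor product of a presentation of $X$ with one of $Y$ computes $G(X\otimes Y)$, and compatibility of $F$ with $\otimes$ propagates to $G$.

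I expect the main obstacle to be the well-definedness and functoriality of $G$, that is, its independence of the chosen presentations. This rests on the comparison theorem: any two presentations are connected by chain maps that are unique up to chain homotopy, so I must check that an additive functor followed by $\coker$ sends chain-homotopic maps to the same morphism, thereby annihilating the homotopy ambiguity and making $G$ a genuine functor. Upgrading $G$ from merely additive to right exact and monoidal is the remaining technical point, but it reduces to the right exactness of cokernels and of $\otimes$ in the abelian target $\mcD$, together with the fact that $F$ already respects these structures on $\mcP_\mcC$.
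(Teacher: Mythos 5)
Your proposal is correct and follows essentially the same route as the paper: both construct the extension as the zeroth left derived functor, i.e.\ $G(X)=\coker\bigl(F(P_1)\to F(P_0)\bigr)$ for a chosen projective presentation, and both obtain uniqueness from the fact that right exactness forces any extension $G$ to satisfy $G(X)\cong\coker(G(d))$, hence to agree with the derived-functor construction applied to its restriction to $\mcP_\mcC$. You spell out the comparison-theorem/homotopy-invariance checks, the horseshoe argument for right exactness, and the monoidal compatibility more explicitly than the paper does, but the underlying argument is the same.
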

\begin{proof}
Given abelian $\mcD$ and structure-preserving right-exact map $F:\mcP\to \mcD$, we must define a unique right exact structure-preserving extension $F: \mcC\to \mcD$.\\
We extend this map $F$ to a map $F'$ as if constructing the zeroth derived functor. First, for each object $A\in \mcC$, there exists an exact projective resolution $P_A^\bullet\to A \to 0$, so we define $F'(A)=L_0 F(P_A^\bullet)$ and for a map $f: A\to B$, if $P_A^\bullet \to A \to 0$ is a projective resolution of $A$ and $P_B^\bullet \to B \to 0$ is a projective resolution of $B$, then there exists a chain map $\overline{f}: P_A^\bullet \to P_B^\bullet$ that is unique up to chain homotopy inducing $f$ as a map on $H_0(\overline{f})$. Then, define $F'(A)=H_0 (F(P_A^\bullet))$ then $F'(f)=L_0 F(\overline{f})$. For each diagram,
    \[ \begin{tikzcd} P_A^\bullet \arrow{r}{\phi} \arrow{d}{\overline{f}} & A \arrow{d}{f} \arrow{r} & 0 \\ P_B^\bullet \arrow{r}{\phi'}  & B \arrow{r} & 0,
\end{tikzcd}
\] we obtain an exact diagram with exact rows     \[ \begin{tikzcd} F(P_A^\bullet) \arrow{d}{F(\overline{f})}  \\F(P_B^\bullet) 
\end{tikzcd}
\]
First, note that since for any projective resolution, the induced functors are naturally isomorphic, restricted to the projective subcategory, $F'$ is naturally isomorphic to $F$. Suppose $G$ was another extension of $F$. Then, since $G$ is right exact, $G$ must be be naturally isomorphic to $L_0 G(P_A^\bullet)$ which is naturally equivalent to $G$, but since $F$ and $G$ are naturally equivalent functors on $\mcP$ and both $L_0 G$ and $L_0 F$ are entirely determined by action on the projective subcategory, then $L_0 G$ is naturally equivalent to $L_0 F$, which is naturally equivalent to $F$. Therefore, the choice of extension was unique up to natural equivalence.
\end{proof}

Now we prove the main result of this section.

\begin{theorem}
Let $\mcC$ be an abelian category with any additional structures and let $\mcP$ be its full projective subcategory. Then $\mcC$ is the unique abelian completion of $\mcP$ up to the corresponding natural equivalence. Namely, the full projective subcategory is a complete invariant of a finite tensor category.
\end{theorem}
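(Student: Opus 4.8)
The plan is to derive the theorem as a direct consequence of the localization (reflection) Lemma just proved, by invoking a standard uniqueness-of-reflection argument. First I would observe that the preceding Lemma establishes that the inclusion $\iota\colon \mcP \hookrightarrow \mcC$ exhibits $\mcC$ as the reflection of $\mcP$ along the inclusion of $\mcA$ into $\mcB_R$; that is, $\mcC$ together with $\iota$ satisfies the universal property that every structure-preserving right-exact functor $F\colon \mcP \to \mcD$ into an abelian $\mcD$ factors uniquely (up to natural equivalence) through $\iota$. The theorem is essentially the statement that this reflection, when it exists, is unique up to the appropriate equivalence, so the bulk of the work is recognizing that "abelian completion of $\mcP$" is precisely "reflection of $\mcP$ in $\mcA$."

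Next I would carry out the uniqueness argument explicitly. Suppose $\mcC$ and $\mcC'$ are both abelian categories (with the given extra structure) having $\mcP$ as full projective subcategory, with inclusions $\iota\colon \mcP \to \mcC$ and $\iota'\colon \mcP \to \mcC'$. Applying the universal property of the Lemma to $\iota'$ (viewed as a right-exact structure-preserving functor $\mcP \to \mcC'$, since $\mcC'$ is abelian) yields a unique extension $\Phi\colon \mcC \to \mcC'$ with $\Phi \circ \iota \cong \iota'$; symmetrically one gets $\Psi\colon \mcC' \to \mcC$ with $\Psi \circ \iota' \cong \iota$. Then $\Psi \circ \Phi$ and $\id_{\mcC}$ are both extensions of $\iota$ along $\iota$, so by the uniqueness clause of the Lemma $\Psi \circ \Phi \cong \id_{\mcC}$, and symmetrically $\Phi \circ \Psi \cong \id_{\mcC'}$. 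Hence $\mcC \simeq \mcC'$ as objects of $\mcA$, which is exactly the asserted uniqueness of the abelian completion up to natural equivalence. The final sentence of the theorem is then the specialization to a finite tensor category $\mcC$ with $\mcP = \mcP_\mcC$.

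The step I expect to be the main obstacle is not the formal reflection argument, which is routine once the Lemma is in hand, but rather verifying that the hypotheses of the Lemma genuinely apply in both directions. Specifically, one must confirm that $\mcC'$ really is an object of $\mcA$ (abelian with enough projectives) and that $\iota'$ is right-exact and structure-preserving, so that the universal property can be invoked with $\mcD = \mcC'$; for a finite tensor category this is clear since it has enough projectives and the inclusion of projectives is exact, but I would want to state precisely what "abelian completion" means so that any two candidate completions are forced to be objects of $\mcA$ sharing the same $\mcP$. A secondary subtlety is the bookkeeping of natural equivalences: the extensions are only defined up to natural isomorphism, so the composites should be compared within the localized hom-categories, and I would be careful that the identifications $\Phi\circ\iota\cong\iota'$ and $\Psi\circ\iota'\cong\iota$ are compatible enough that $\Psi\circ\Phi$ and $\id_{\mcC}$ agree as extensions of $\iota$ in the sense the Lemma requires.
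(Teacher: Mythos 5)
Your proposal is correct and follows essentially the same route as the paper: both $\mcC$ and $\mcC'$ are recognized as reflections of $\mcP$ in $\mcA$ via the preceding Lemma, functors in both directions are produced from the universal property, and their composites are identified with the identity because they restrict to the identity on $\mcP$ and extensions are unique up to natural equivalence. The only cosmetic difference is that the paper re-derives this last uniqueness step via the $L_0$ construction (a right-exact functor is determined by its restriction to projectives), whereas you cite the Lemma's uniqueness clause directly.
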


\begin{proof}
Let $\mcB_R$ denote the category of additive objects of the structure of $\mcC$ and morphisms equivalence classes right exact structure preserving maps and let $\mcA$ denote the full subcategory of $\mcB_R$ of abelian objects. Suppose $\mcC$ and $\mcC'$ are both abelian categories with full projective subcategory $\mcP$. Then, both $\mcC$ and $\mcC'$ are reflections of $\mcP$ in $\mcA$. Therefore, let $\iota,\iota'$ denote the inclusion functors from $\mcP$ into $\mcC$ and $\mcC'$ respectively. Then, by the definition of reflection, there exist two right exact structure-preserving functors $F:\mcA\to\mcA'$ and $G:\mcA'\to \mcA$ such that $F\circ \iota\simeq \iota'$ and $G\circ \iota'\simeq \iota$. Then, by post-composing $F$ with natural transformation $\eta$ and $G$ with $\eta'$, we may assume that $F\circ \iota = \iota'$ and $G\circ \iota'=\iota$. Therefore, for objects in $P\in \mcP$, $G\circ F(\iota(P))=P$ and $F\circ G(\iota'(P))=P$. Thus, on the projective subcategories, both $G\circ F$ and $F\circ G$ are the identity functor, so since $F\circ G\simeq L_0 (F\circ G)$ which is the identity transformation, so $F\circ G$ is naturally isomorphic to the identity, and by the same argument, as is $G\circ F$. Thus, $\mcA$ and $\mcA'$ are isomorphic via structure preserving maps.
\end{proof}

\subsection{Quasi-dominated subcategory as a complete invariant}

By definition, an FTC $\mcC$ is an abelian category, therefore all kernels and cokernels are in the category. This would lead to the Green ring, which could be very complicated in the non-semisimple case.  Instead we may consider only a sub-category $\mcC_{QD}$ of $\mcC$, not necessarily abelian or monoidal, that consists of only objects that are quasi-dominated by the simple objects. 

An object $X$ of $\mcC$ is quasi-dominated by some simple objects $\{X_i\}$ of $\irRC$ if there exist a family of morphisms $\alpha_{i,m_i}: X_i\rightarrow X, \beta_{i,m_i}: X\rightarrow X_i$ such that
$$ \textrm{Id}_X-\sum_{i, m_i}\alpha_{i,m_i}\beta_{i,m_i}$$ is a negligible morphism \cite{turaev2010quantum}. It is clear that every negligible object is in $\mcC_{QD}$, every simple object is in $\mcC_{QD}$, and finite direct sums of neglible and simple objects are in $\mcC_{QD}$.

\subsubsection{Characterization of quasi-dominated subcategory}

Let $\mcN_\mcC$ be the set of negligible objects, and $\mcS_\mcC$ be the set of simple objects of a finite tensor category $\mcC$, respectively.  Then we will show in this subsection:

$$\overline{\mcN_\mcC\cup \mcS_\mcC}^{\textrm{AC}}=\mcC_{QD}. $$

\begin{theorem} \label{theorem: CQD classification}
Let $\mcA$ be a $\mbbC$-linear abelian category with a function $tr_X: \hom(X,X)\to \mbbC$ satisfying cyclicity. Then, the quasi-dominated subcategory is the additive completion of the set of negligible and simple objects.\end{theorem}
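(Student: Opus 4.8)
The plan is to prove the two inclusions $\overline{\mcN_\mcC \cup \mcS_\mcC}^{\textrm{AC}} \subseteq \mcC_{QD}$ and $\mcC_{QD} \subseteq \overline{\mcN_\mcC \cup \mcS_\mcC}^{\textrm{AC}}$ separately. The basic tools I would first establish are formal consequences of cyclicity and $\mbbC$-linearity of $tr$: the class of negligible morphisms (those $f$ with $tr(fg)=0$ for all composable $g$) is a two-sided ideal, i.e.\ it is closed under addition and under composition on either side with arbitrary morphisms; and $tr$ is additive across direct sums, so $X \oplus Y$ is negligible iff both $X$ and $Y$ are. The ideal property is the engine behind both inclusions, since it lets me absorb all ``error terms'' into a single negligible morphism.

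For the inclusion $\supseteq$, I would check the three generating cases. A simple object $X$ is quasi-dominated by taking a single pair $\alpha=\beta=\mathrm{Id}_X$, so that $\mathrm{Id}_X-\alpha\beta=0$ is negligible. A negligible object $N$ is quasi-dominated using the empty family, since $\mathrm{Id}_N$ is negligible by definition. Finally, if $X$ and $Y$ are quasi-dominated with witnessing families, then conjugating these families by the inclusions and projections of $X\oplus Y$ produces a family for $X\oplus Y$ whose error term is a sum of negligible morphisms, hence negligible; thus quasi-domination is stable under finite direct sums. By the same conjugation trick (applied to a splitting idempotent) every direct summand of a quasi-dominated object is again quasi-dominated, a fact I will reuse below.

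For the harder inclusion $\subseteq$, I would use Krull--Schmidt (valid in the finite-length setting of a finite tensor category) to write a quasi-dominated $X$ as a finite direct sum of indecomposables $Y_k$; by the summand remark each $Y_k$ is quasi-dominated, so it suffices to prove that an indecomposable quasi-dominated object $Y$ is either negligible or simple. Suppose $Y$ is not negligible, and write its witnessing relation as $\mathrm{Id}_Y = \sum_i \alpha_i\beta_i + n$ with each $\alpha_i\colon X_i\to Y$, $\beta_i\colon Y\to X_i$, $X_i$ simple, and $n$ negligible. Since $Y$ is indecomposable, $\End(Y)$ is local; the negligible endomorphism $n$ cannot be a unit, for otherwise $\mathrm{Id}_Y=n\,n^{-1}$ would force $tr(\mathrm{Id}_Y)=0$ and make $Y$ negligible. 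In a local ring a unit expressed as a sum must have a unit summand, so some $\alpha_i\beta_i\in\End(Y)$ is invertible. Then $\alpha_i$ is a split epimorphism out of the simple object $X_i$, and Schur's lemma (the only idempotents of $\End(X_i)$ are $0$ and $\mathrm{Id}$) upgrades this to an isomorphism $Y\cong X_i$, so $Y$ is simple.

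I expect the main obstacle to be the local-ring step of the last paragraph: pinning down precisely that the negligible summand $n$ is a non-unit while the total is the identity, and that this forces one of the $\alpha_i\beta_i$ to be invertible rather than merely nonzero. This is where cyclicity is genuinely used (to see that $n$ being a unit would yield $tr(\mathrm{Id}_Y)=0$) and where the Krull--Schmidt and local-endomorphism-ring structure of the ambient category is essential. I would also need to confirm the additivity of $tr$ over direct sums from cyclicity and linearity alone, as that underwrites both the definition of negligible objects and the reduction to indecomposables.
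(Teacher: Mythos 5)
Your proposal is correct, but the crucial indecomposable step is genuinely different from the paper's. Both arguments share the same skeleton: the conjugation-by-inclusion/projection trick shows quasi-domination passes to direct summands, and the problem reduces to showing that an indecomposable quasi-dominated object is negligible or simple (the paper treats the easy inclusion as immediate in the text preceding the theorem, and leaves the finite decomposition into indecomposables implicit, whereas you invoke Krull--Schmidt explicitly). At the key step, however, the paper never touches endomorphism rings: it splits into cases according to whether all compositions $\beta_{j,m_j}\alpha_{i,m_i}\colon X_i\to X_j$ vanish. If they all vanish, then $\bigl(\sum_{i,m_i}\alpha_{i,m_i}\beta_{i,m_i}\bigr)^2=0$, so $\bigl(\mathrm{Id}_X-\sum\alpha\beta\bigr)\bigl(\mathrm{Id}_X+\sum\alpha\beta\bigr)=\mathrm{Id}_X$, and negligibility of the first factor forces every endomorphism of $X$ to be traceless, i.e.\ $X$ is negligible; if some composition is nonzero, Schur's lemma makes it an isomorphism, which splits the short exact sequence $0\to X_i\to X\to\coker\alpha_{i,m_i}\to 0$, and indecomposability gives $X\cong X_i$. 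You instead package this case analysis into ring theory: since $Y$ is not negligible, the negligible error term $n$ is a non-unit in the local ring $\End(Y)$, so some $\alpha_i\beta_i$ must be a unit, and the idempotent argument yields $Y\cong X_i$. The trade-off: the paper's dichotomy uses only Schur's lemma and exactness, so it stays closer to the stated minimal hypotheses ($\mbbC$-linear abelian with a cyclic trace), while your route additionally needs locality of $\End(Y)$ (Fitting's lemma/finite length) --- harmless in the finite tensor category setting you flag, but an extra input; in exchange, your argument is the standard systematic one, makes the finiteness assumption visible rather than implicit, and avoids having to treat the ``all compositions vanish'' configuration separately.
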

\begin{proof}
First, suppose $X= \bigoplus\limits_k M_k \in \mcA_{QD}$. Let $\{X_i\}$ denote the set of simple objects. Then, there exist maps $\alpha_{i,m_i}: X_i\rightarrow X, \beta_{i,m_i}: X\rightarrow X_i$ such that $ \textrm{Id}_X-\sum_{i, m_i}\alpha_{i,m_i}\beta_{i,m_i}$ is a negligible morphism. We claim that the maps $\tilde{\alpha}_{i,m_i} =\alpha_{i,m_i} \circ \iota$ and $\tilde{\beta}_{i,m_i}= \pi \circ \beta_{i,m_i}$, where $\iota$ is the inclusion of $M_1$ into $X$ and $\pi$ is the projection from $X$ to $M_1$, satisfy the desired properties such that $M_1\in \mcA_{QD}$. Let $h: M_1 \to M_1$ be a morphism. Then, first, since $\iota \circ h \circ \pi: M\to M$, then $$tr\big((\textrm{Id}_X-\sum_{i, m_i}\alpha_{i,m_i}\beta_{i,m_i}) \circ (\iota \circ h \circ \pi)\big)=0.$$  Therefore,
    \begin{align*}
tr\big(\pi\circ (\textrm{Id}_X-\sum_{i, m_i}\alpha_{i,m_i}\beta_{i,m_i} ) \circ (\iota \circ h)\big)&=0\\
tr\Big( \big(\pi \circ \textrm{Id}_X \circ \iota - \pi \circ (\sum_{i, m_i}\alpha_{i,m_i}\beta_{i,m_i}) \circ \iota\big) \circ h\Big)&=0\\
tr\Big(\big(\textrm{Id}_{M_1}- \sum_{i, m_i}\tilde{\alpha}_{i,m_i}\tilde{\beta}_{i,m_i}\big) \circ h\Big) 
&=0.
\end{align*} Thus, by definition $\big(\textrm{Id}_{M_1}-\sum_{i, m_i}\tilde{\alpha}_{i,m_i}\tilde{\beta}_{i,m_i}\big)$ is negligible. Thus, since $\mcA_{QD}$ is closed under retracts, then it is idempotent complete. \\
Now suppose $X$ is an indecomposable object in $\mcA_{QD}$. Then, there exist maps $\alpha_{i,m_i}: X_i\rightarrow X, \beta_{i,m_i}: X\rightarrow X_i$ such that $ \textrm{Id}_X-\sum_{i, m_i}\alpha_{i,m_i}\beta_{i,m_i}$ is a negligible morphism. First, suppose that $\beta_{j,m_j}\alpha_{i,m_i}=0$ for all $(i,m_i), (j,m_j)$. Let $h:M\to M$. Then $$(\textrm{Id}_X-\sum_{i, m_i}\alpha_{i,m_i}\beta_{i,m_i} )(\textrm{Id}_X+\sum_{i, m_i}\alpha_{i,m_i}\beta_{i,m_i} ) h: M\to M,$$ so it has has no trace. But, $(\textrm{Id}_X-\sum_{i, m_i}\alpha_{i,m_i}\beta_{i,m_i} )(\textrm{Id}_X+\sum_{i, m_i}\alpha_{i,m_i}\beta_{i,m_i} )$ is just the identity on $M$, so this implies that $h$ has no trace, so by definition, $M$ is negligible.
Now, suppose $\beta_{j,m_j}\alpha_{i,m_i}\ne 0$ for some $(i,m_i),(j,m_j)$. Then, since the map is between two simple objects, it must be an isomorphism. Therefore, there exists an inverse $\sigma: X_j \to X_i$. Therefore, the short exact sequence $$0\to X_i \to X \to \coker \alpha_{i,m_i} \to 0$$ has a retract, namely $\sigma \circ \beta_{j,m_j}$, so the sequence splits. Thus, $X\simeq X_j$ since it was assumed to be indecomposable. 
\end{proof}

\subsection{Categorical reconstruction from projectives}

First, recall that for any FTC $\mathcal{C}$, since it is a finite abelian category, every object has a projective cover and an injective hull \cite[Remark 1.8.7]{etingof2004finite}.  Also, the injective objects and projective objects align. 

For any FTC $\mcC$, let $\mcP_\mcC$ denote its full subcategory of projective objects.  For any object in $\mcC$, it has an projective cover $P_1$ and injective hull $P_2$ in $\mcP_\mcC$, so it is the kernel of the cokernel of a map $P_1\to P_2$. Further, any kernel of a cokernel of a map $P_1\to P_2$ is an element of $\mcC$. Therefore, the objects of $\mcC$ are fully determined by the maps between projectives.  \\

We record the following characterization of simple objects.

\begin{lemma} If $\mcP$ is a full projective subcategory of a finite tensor category, then for a nonzero map $\phi: P_i \to P_k$ between  projective indecomposables, $\im \phi$ is a simple object if and only if for every map nonzero $\psi:P_l \to P_k$ and $j:P_k\to P$ such that $j\circ \psi=0$, then $j\circ \phi=0$ as well where $P_i,P_k,P_l$ are projective indecomposables and $P$ is projective. \end{lemma}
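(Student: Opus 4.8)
The plan is to convert the purely morphism-theoretic hypothesis into a statement about subobjects of $P_k$, and then read off simplicity from the internal structure of $P_k$. Two standard features of a finite tensor category drive the argument: projectives and injectives coincide \cite[Remark 1.8.7]{etingof2004finite}, so the indecomposable projective $P_k$ is also indecomposable injective and hence has a \emph{simple} socle $\operatorname{soc}(P_k)$; and every object has an injective hull, which is therefore again projective. Throughout I would think of the objects as finite-length modules over a weak quasi-Hopf algebra $H$ with $\mcC=\Rep(H)$, so that every nonzero subobject has nonzero socle and $\operatorname{soc}(P_k)$ is the unique minimal nonzero subobject of $P_k$.

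First I would establish the dictionary between maps out of $P_k$ and subobjects of $P_k$. Given any subobject $N\subseteq P_k$, composing the quotient $P_k\twoheadrightarrow P_k/N$ with the inclusion of $P_k/N$ into its injective hull $P$ (which is projective) produces a map $j\colon P_k\to P$ with $\ker j=N$; thus \emph{every} subobject of $P_k$ is realized as the kernel of a map into a projective. Next I would record the equivalence: there exists a nonzero $\psi\colon P_l\to P_k$ with $j\circ\psi=0$ if and only if $\ker j\neq 0$. Indeed, if such a $\psi$ exists then $0\neq\im\psi\subseteq\ker j$; conversely, a nonzero $\ker j$ contains a simple subobject $S$, and the composite $\psi\colon P(S)\twoheadrightarrow S\hookrightarrow P_k$ is a nonzero map from the projective indecomposable $P_l:=P(S)$ with $\im\psi=S\subseteq\ker j$, so $j\circ\psi=0$.

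With this dictionary the hypothesis reads: $j\circ\phi=0$, i.e. $\im\phi\subseteq\ker j$, for every $j\colon P_k\to P$ with $\ker j\neq 0$. Since every nonzero subobject of $P_k$ occurs as such a kernel, this says exactly that $\im\phi$ is contained in every nonzero subobject of $P_k$, i.e. $\im\phi\subseteq\bigcap_{0\neq N\subseteq P_k}N=\operatorname{soc}(P_k)$, the last equality holding because the socle is the unique minimal nonzero subobject. I would then conclude both implications simultaneously: the stated condition is equivalent to $\im\phi\subseteq\operatorname{soc}(P_k)$, and since $\phi\neq 0$ forces $\im\phi\neq 0$ while $\operatorname{soc}(P_k)$ is simple, the inclusion $\im\phi\subseteq\operatorname{soc}(P_k)$ holds precisely when $\im\phi=\operatorname{soc}(P_k)$, that is, precisely when $\im\phi$ is simple. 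The step carrying the real weight is the realization lemma—detecting an arbitrary subobject, and in particular the socle, as the kernel of a morphism whose \emph{target is again projective}. This is exactly where self-injectivity of the finite tensor category is indispensable: without it the morphism-only condition could not detect subobject containment, and the characterization would break down.
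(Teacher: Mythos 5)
Your proof is correct and takes essentially the same route as the paper's: both arguments rest on projectives coinciding with injectives in an FTC, on the simple socle of the indecomposable injective $P_k$, on realizing subobjects of $P_k$ as kernels of maps (quotient followed by inclusion into an injective hull, which is projective), and on producing $\psi$ as the projective cover of a simple subobject composed with its inclusion. The only difference is organizational: you package both implications into one chain of equivalences (the morphism condition $\iff$ $\im \phi \subseteq \operatorname{soc}(P_k)$ $\iff$ $\im\phi$ simple), whereas the paper proves the two directions separately using exactly these ingredients.
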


\begin{proof}
First, note that every simple object $V_i$ has indecomposable projective cover $P_i$ and injective hull $P_k$. Then, if the image of $\phi$ is a simple subobject of $P_k$ which is indecomposable, then it must be its socle, then it is contained in every subobject. Thus, for any $\psi$, $\im \phi \subset \im \psi$, so if $j\circ \psi=0$, then $j\circ \phi=0$. \\

Further, suppose $\phi$ satisfies this property for every map $\psi, j$. Then, since $P_k$ is projective, it contains a simple subobject $V_k$ which has projective cover which we call $P_l$. Thus, by considering $\psi$ to be the projective covering map composed with the injective map $V_k\to P_k$, and $j$ to be the composition of the cokernel of $\psi$ with the inclusion into its injective hull, then $\psi \circ j=0$. However, this implies that $\phi \circ j=0$, but since the inclusion into its injective cover is injective, then $\im \phi \subset \im \psi$, but since $\im \psi$ is simple, then so is $\im \phi$ and they are isomorphic. 
\end{proof}

Categorically, we can reconstruct an FTC $\mcC$ from its projective ideal $\mcP_\mcC$ as follows.

We recover the objects of $\mcC$ by considering the image of maps in $\mcP_\mcC$. Since each object has a projective cover  and an injective hull, then each object $A$ is isomorphic to the image of a map denoted $\phi_A$ in $\mcP_\mcC$. Further, since $\mcC$ is abelian, then the image of every map in $\mcP_\mcC$ must be in $\mcC$. Additionally, any map between two objects $A,B\in \mcC$ can be recovered as a map from the projective cover of $A$ to the injective hull of $B$ that descends to a map between $\im \phi_A \to \im \phi_B$. Further, any map $f$ from the projective cover of $A$ to the injective hull of $B$ that descends to a map between $\im \phi_A \to \im \phi_B$ defines a map between $A$ and $B$ via composition with the fixed isomorphisms $A\to \im \phi_A$ and $\im \phi_B\to B$. Furthermore, we recover tensor products and direct sums of $A$ and $B$ by the images of the tensor products and direct sums of $\phi_A$ and $\phi_B$. Finally, for three objects $A_1, A_2, A_3$ with projective covers $P_i$ and covering maps $\psi_i$, we recover the associativity constraint by 
\[ \begin{tikzcd}
(P_1 \otimes P_2) \otimes P_3 \arrow{r}{a_{P_1,P_2,P_3}} \arrow{d}{(\psi_1 \otimes \psi_2) \otimes \psi_3} & P_1\otimes(P_2\otimes P_3)  \arrow{d}{\psi_1 \otimes (\psi_2 \otimes \psi_3)} \\
(A_1 \otimes A_2) \otimes A_3 \arrow{r}{a_{A_1, A_2, A_3}}& A_1 \otimes (A_2 \otimes A_3)
\end{tikzcd}
\]
which serves as a definition since the left and right side of the diagrams are surjective, and the top arrow is an isomorphism, so there is only one possible map $a_{A_1, A_2, A_3}$.

\section{Indecomposable modules of Nicholas Hopf algebras and their doubles}\label{sec: nicholas}

In this section, we show how to determine the Green ring $r(K_m)$ of $K_m$ for even $m=2n$ from the Green ring $r(DK_n)$ of the Drinfeld double $DK_n$.

Let $K_{2n}$ be generated by $[K], [\xi_1],...,[\xi_n], [\overline{\xi}_1]=\xi_{n+1}, ..., [\overline{\xi}_n]=\xi_{n+n}$ and let $DK_n$ be generated by $\overline{K}, K, \xi_1,...,\xi_n, \overline{\xi}_1, ..., \overline{\xi}_n$.  Note that $K_{2n}=DK_n/<K-\bar{K}>$, and $\pi$ denotes the projection map $\pi: DK_n \to K_{2n}$ that identifies $\overline{K}$ and $K$.

The projection $\pi$ induces maps between the Green rings $r(K_{2n})$ and $r(DK_n)$.  Using these two maps, we identify the Green ring $r(K_{2n})$ as a sub-ring $r(DK_n)_0$ of $r(DK_n)$, where the two generators $\overline{K}$ and $K$ act the same way.

\subsection{From the Green ring $r(K_{2n})$ to the Green ring $r(DK_n)$}
\subsubsection{The map $\pi_*:  r(K_{2n})\to r(DK_n)_0$}
\begin{lemma} \label{lem: WD}
If $M\cong M'$ as $DK_n$ modules and $K.m=\overline{K}.m$ for all $m\in M$, then $K.m'=\overline{K}.m'$ for all $m'\in M'$.
\end{lemma}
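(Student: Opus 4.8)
The label ``WD'' signals that this is a well-definedness check: the property ``$K$ and $\overline{K}$ act identically'' should be an isomorphism invariant of $DK_n$-modules, and the whole content is that module isomorphisms are equivariant for every algebra element, in particular for the two group-like elements $K$ and $\overline{K}$. The plan is to fix an isomorphism $\varphi\colon M\to M'$ of $DK_n$-modules, transport the relation $K.m=\overline{K}.m$ through $\varphi$, and use surjectivity of $\varphi$ to conclude that the relation holds for every element of $M'$.

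\textbf{Key steps.} First I would let $\varphi\colon M\to M'$ be a chosen $DK_n$-module isomorphism, recalling that by definition $\varphi(a.m)=a.\varphi(m)$ for all $a\in DK_n$ and $m\in M$. Next, given an arbitrary $m'\in M'$, I would use that $\varphi$ is surjective (indeed bijective) to write $m'=\varphi(m)$ for some $m\in M$. Then the computation
\begin{align*}
K.m' &= K.\varphi(m) = \varphi(K.m) = \varphi(\overline{K}.m) = \overline{K}.\varphi(m) = \overline{K}.m'
\end{align*}
does everything: the second and fourth equalities are equivariance of $\varphi$ applied to $K$ and to $\overline{K}$ respectively, while the middle equality is exactly the hypothesis $K.m=\overline{K}.m$ in $M$. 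Since $m'\in M'$ was arbitrary, this establishes $K.m'=\overline{K}.m'$ throughout $M'$.

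\textbf{Main obstacle.} There is essentially no obstacle here; the statement is a formal consequence of what a module homomorphism is. The only point requiring the slightest care is remembering that one must express a \emph{general} $m'\in M'$ as an image under $\varphi$ (hence the use of surjectivity), rather than only checking the relation on a convenient generating set; but since the conclusion is a pointwise identity on all of $M'$, invoking surjectivity of the isomorphism settles it immediately. This lemma then licenses treating ``$K$ acts as $\overline{K}$'' as a property of isomorphism classes, which is what is needed to regard $r(K_{2n})$ as the subring $r(DK_n)_0$ of $r(DK_n)$.
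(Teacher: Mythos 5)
Your proof is correct and is essentially the paper's argument: the paper phrases it in one line as ``$K-\overline{K}$ annihilates $M$, hence annihilates any module isomorphic to $M$,'' and your equivariance-plus-surjectivity computation is exactly the verification of that claim, just written out elementwise.
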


\begin{proof}
Since $K-\overline{K}$ must annihilate $M$, it must annihilate any $DK_n$ module isomorphic to $M$.
\end{proof}
Therefore, the set of isomorphism classes of modules with this property defines a subset of $r(DK_n)$. Define $r(DK_n)_0$ to be $$r(DK_n)_0 = \{[M]\in r(DK_n) : K.m = \overline{K}.m \,\forall m\in M\}.$$ 

\begin{lemma} \label{lem: subring}
If $M$ and $M'$ are $DK_n$ modules, \begin{enumerate}[i.] 
    \item if $[M_1],[M_2]\in r(DK_n)_0$, then $[M_1\oplus M_2]\in r(DK_n)_0$
    \item if $[M_1],[M_2]\in r(DK_n)_0$, then $[M_1\otimes M_2]\in r(DK_n)_0$\end{enumerate}
\end{lemma}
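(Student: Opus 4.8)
The plan is to verify the defining condition $K.m = \overline{K}.m$ directly on chosen representatives, exploiting the fact that $K$ and $\overline{K}$ are grouplike in $DK_n$. By Lemma \ref{lem: WD} the property cutting out $r(DK_n)_0$ is an isomorphism invariant, so for each operation it suffices to produce one representative module on which $K$ and $\overline{K}$ act identically; equivalently, on which the element $z := K - \overline{K}$ acts as zero, exactly as in the proof of Lemma \ref{lem: WD}.

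For part (i) I would use that the $DK_n$-action on $M_1 \oplus M_2$ is diagonal. Writing a general element as $(m_1, m_2)$ with $m_i \in M_i$, we have $z.(m_1,m_2) = (z.m_1, z.m_2) = (0,0)$, since $z$ annihilates each $M_i$ by hypothesis. Hence $K$ and $\overline{K}$ agree on $M_1 \oplus M_2$, so $[M_1 \oplus M_2] \in r(DK_n)_0$. This step is entirely routine.

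For part (ii) the action on $M_1 \otimes M_2$ is governed by the comultiplication of $DK_n$, and the key structural input is that both $K$ and $\overline{K}$ are grouplike, i.e. $\Delta(K) = K \otimes K$ and $\Delta(\overline{K}) = \overline{K} \otimes \overline{K}$. Granting this, the action of $z$ on a simple tensor is
\[
z.(m_1 \otimes m_2) = (K.m_1)\otimes(K.m_2) - (\overline{K}.m_1)\otimes(\overline{K}.m_2),
\]
and since $K.m_i = \overline{K}.m_i$ for $i=1,2$, the two terms coincide and the difference vanishes. By bilinearity $z$ then annihilates all of $M_1 \otimes M_2$, giving $[M_1 \otimes M_2] \in r(DK_n)_0$.

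The main obstacle — indeed the only nontrivial point — is justifying that $\overline{K}$ is grouplike in the Drinfeld double. For $K$ this is inherited directly from the Nichols Hopf algebra structure, where $\Delta(K) = K \otimes K$. For $\overline{K}$, which arises from the dual factor in the construction of $DK_n$, I would appeal to the standard description of the double: the grouplike elements of $K_n$ and the grouplike (character) elements of its dual remain grouplike in $D(K_n)$. This is consistent with the quotient $K_{2n} = DK_n/\langle K - \overline{K}\rangle$ carrying $\overline{K}$ to the grouplike generator $K$. Once the grouplike property of $\overline{K}$ is secured, both parts follow immediately, and together they show that $r(DK_n)_0$ is closed under $\oplus$ and $\otimes$, hence a subring (possibly a rng) of $r(DK_n)$.
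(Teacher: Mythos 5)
Your proof is correct and takes essentially the same route as the paper: the diagonal action handles the direct sum, and the action of $K$ and $\overline{K}$ on simple tensors (via $\Delta(K)=K\otimes K$, $\Delta(\overline{K})=\overline{K}\otimes\overline{K}$) handles the tensor product. The only difference is that you explicitly justify that $\overline{K}$ is grouplike in the double, a fact the paper uses silently but which is confirmed by its own presentation of $DK_1$, where both grouplike generators $b,c$ are stated to be group-like.
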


\begin{proof}
i. Let $(m,m')\in M\oplus M'$. Then, \begin{align*}
    K.(m,m')&= (K.m, K.m')\\
    &= (\overline{K}.m, \overline{K}.m')\\
    &= \overline{K}.(m,m').
\end{align*} Thus, if $[M_1], [M_2]\in r(DK_n)_0$, then $[M_1\oplus M_2]\in r(DK_n)_0$. \\
ii. Let $m\otimes m'$ be a simple tensor in $M\otimes M'$. Then, \begin{align*}
    K.(m\otimes m')&= (K.m\otimes K. m')\\
    &= \overline{K}.m\otimes \overline{K}.m'\\
    &= \overline{K}.(m\otimes m').
\end{align*} Thus, since $K-\overline{K}$ is in the annihilator of all simple tensors, it is in the annihilator of the entire tensor product. Thus, if $[M_1],[M_2]\in r(DK_n)_0$, then $[M_1\otimes M_2]\in r(DK_n)_0$.
\end{proof}

Thus, since the identity module in $r(DK_n)$ is the trivial representation is in $r(DK_n)_0$, then Lemma \ref{lem: subring} implies that this is a subring of the Green ring of $DK_n$.
\begin{lemma} \label{lem: addmult}
Any $K_{2n}$ module homomorphism can be viewed as a $DK_n$ module homomorphism on the induced $DK_n$ module structure via the composition with the projection map $\pi: DK_n \to K_{2n}$ that identifies $\overline{K}$ and $K$. Therefore, any two modules that are isomorphic as $K_{2n}$ modules are isomorphic as $DK_n$ modules.
\end{lemma}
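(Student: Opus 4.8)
The plan is to recognize the induced $DK_n$-module structure as \emph{restriction of scalars} along the algebra surjection $\pi \colon DK_n \to K_{2n}$, and then to invoke the elementary functoriality of that construction. First I would make the induced structure explicit: given a $K_{2n}$-module $M$, define a $DK_n$-action on the same underlying vector space by $d \cdot m := \pi(d)\, m$ for $d \in DK_n$ and $m \in M$. Since $\pi$ is an algebra homomorphism, this is a genuine module action, as $(d_1 d_2)\cdot m = \pi(d_1 d_2)\, m = \pi(d_1)\pi(d_2)\, m = d_1 \cdot (d_2 \cdot m)$ and $1 \cdot m = m$. This is the structure referred to in the statement.

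For the first assertion, let $f \colon M \to M'$ be a $K_{2n}$-module homomorphism and equip $M$ and $M'$ with the induced $DK_n$-structures. For any $d \in DK_n$ and $m \in M$ the element $\pi(d)$ lies in $K_{2n}$, so
\[
f(d \cdot m) = f\big(\pi(d)\, m\big) = \pi(d)\, f(m) = d \cdot f(m),
\]
where the middle equality is exactly the $K_{2n}$-linearity of $f$. Hence $f$ is automatically $DK_n$-linear, and nothing beyond the algebra-map property of $\pi$ is used.

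For the second assertion I would apply the first to both an isomorphism and its inverse. If $f \colon M \to M'$ is an isomorphism of $K_{2n}$-modules, then $f^{-1}$ is again a $K_{2n}$-module homomorphism (the inverse of a bijective module map is always a module map), so by the first part both $f$ and $f^{-1}$ are $DK_n$-linear. As $f$ is already a bijection of the underlying spaces, it is therefore an isomorphism of $DK_n$-modules, giving $M \cong M'$ as $DK_n$-modules.

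The content here is genuinely just functoriality of restriction of scalars, so I expect no serious obstacle; the only point worth flagging is that the argument uses solely the algebra homomorphism property of $\pi$, not its coalgebra or antipode structure. This lemma supplies exactly the well-definedness, on isomorphism classes, needed for the assignment $[M] \mapsto [M]$ to define the map $\pi_* \colon r(K_{2n}) \to r(DK_n)_0$; its compatibility with $\oplus$ is immediate, while compatibility with $\otimes$ rests on $\pi$ respecting the coproduct and sits alongside the closure statements already established in Lemma \ref{lem: subring}.
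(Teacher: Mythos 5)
Your proposal is correct and is essentially the paper's own argument: both amount to observing that the pulled-back $DK_n$-action factors through $\pi$, so $K_{2n}$-linearity of a map immediately gives $DK_n$-linearity (the paper checks this on the generators $\xi_i, \overline{\xi}_i, K, \overline{K}$, while you phrase it for arbitrary elements via restriction of scalars). Your explicit remark that the inverse of a bijective module map is again a module map is a small point the paper glosses over, but it is the same proof.
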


\begin{proof}
Let $\phi:M\to N$ be a $K_{2n}$ module homomorphism. Let $m\in M$ and $g$ is a generator in $\{\xi_1,...,\xi_n, \overline{\xi}_1,...,\overline{\xi}_n, K,\overline{K}\}$. 
Then, \begin{align*} g.\phi(m) &= [g].\phi(m)\\
 &= \phi([g].m) \\
 &= \phi(g.m).
 \end{align*}
Thus, $\phi$ is a $DK_{n}$ module homomorphism as well. Thus, $M\cong V_1 \oplus V_2$ as $DK_n$ modules as well. 
\end{proof}

Note that for all modules of this form, $K.m=\overline{K}.m$ for all $m\in M$ since $[K]=[\overline{K}]$. 

Define $\pi_*: r(K_{2n}) \to r(DK_n)_0$ to be the isomorphism class of viewing $K_{2n}$ modules as $DK_n$ via composition with the projection $\pi$. By Lemma \ref{lem: addmult}, $\pi_*: r(K_{2n}) \to r(DK_n)_0$ is a ring homomorphism. \\

\subsection{From the Green ring $r(DK_n)$ to the Green ring $r(K_{2n})$}

\subsubsection{The map $\pi^*: r(DK_n)_0\to r(K_{2n})$}
We now define $\pi^*: r(DK_n)_0 \to r(K_{2n})$. Let $\phi: DK_n \to \text{End}(M)$ be a finitely-generated  $DK_n$-module such that $\phi(K) = \phi(\overline{K})$. Define $\pi^*(\phi)([\xi_i]) = \phi(\xi_i)$, $\pi^*(\phi)([\overline{\xi}_i]) = \phi(\overline{\xi}_i)$ and $\pi^*(\phi)([K])=\phi(K)$. To prove that this defines a $K_{2n}$ module structure on $M$, it suffices to show that the maps $\pi^*(\phi)([\xi_i])$, $\pi^*(\phi)([\overline{\xi}_i])$, and $\pi^*(\phi)(K)$ satisfy the properties of \cite[Equation 15]{chang2024modular}. First, note that \begin{align*} (\pi^*(\phi)(K))^2&= (\phi(K))^2 \\&= \phi(K^2)\\ &=\phi(1).\end{align*} Similarly, for both $\xi_i$ and $\overline{\xi_i}$,
\begin{align*} (\pi^*(\phi)(\xi_i))^2&= (\phi(\xi_i))^2 \\&= \phi(\xi_i^2)\\&= 0.\end{align*} Next, \begin{align*} \pi^*(\phi)(\xi_i K + K\xi_i) &= \pi^*(\phi)(\xi_i) \phi(K) + \phi(K) \phi(\xi_i)\\
&= \phi(\xi_i K +K \xi_i) = \phi(0). \end{align*} Further, \begin{align*} \pi^*(\phi)(\overline{\xi}_i K + K\overline{\xi}_i) &= \pi^*(\phi)(\overline{\xi}_i) \phi(K) + \phi(K) \phi(\overline{\xi}_i)\\
&=\pi^*(\phi)(\overline{\xi}_i) \phi(\overline{K}) + \phi(\overline{K}) \phi(\overline{\xi}_i)\\
&= \phi(\overline{\xi}_i \overline{K} + \overline{K} \overline{\xi}_i)= \phi(0). \end{align*} Finally, by the same argument, $\pi^*(\phi)(\xi_i) \pi^*(\phi)(\xi_j)=-\pi^*(\phi)(\xi_j)\pi^*(\phi)(\xi_i)$ and $$\pi^*(\phi)(\overline{\xi}_i)\pi^*(\phi)(\overline{\xi}_j)=-\pi^*(\phi)(\overline{\xi}_j)\pi^*(\phi)(\overline{\xi}_i).$$ The last relation to check is \begin{align*} 
\pi^*(\phi)(\overline{\xi}_i) \pi^*(\phi)(\xi_j)+ \pi^*(\phi)(\xi_j)\pi^*(\phi)(\overline{\xi}_i) &= \phi(\overline{\xi}_i \xi_j + \xi_j \overline{\xi}_i) \\
&= \phi(\delta_{i,j} (1-K\overline{K})) \\
&=\delta_{i,j} (\phi(1) - \phi(K)^2) \\
&= \delta_{i,j} (\phi(1) - \phi(K^2))\\
&=\phi(0). \end{align*}
Thus, $\pi^*(\phi)$ defines a well-defined $K_{2n}$ module structure on a module in $r(DK_n)_0$.

\begin{lemma} \label{lem: Dmultadd}
If $M\simeq V_1 \oplus V_2$ as $DK_n$ modules with $[M]\in r(DK_n)_0$, then $[V_1],[V_2]\in r(DK_n)_0$ and $M\simeq V_1\oplus V_2$ as a $K_{2n}$ module where $[K]$ acts as $K$. Similarly, if $M\simeq V_1 \otimes V_2$ as $DK_n$ modules and $[M], [V_1], [V_2]\in r(DK_n)_0$, then $M\simeq V_1\otimes V_2$ as $K_{2n}$ modules as well.
\end{lemma}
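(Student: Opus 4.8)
The plan is to exploit that the $K_{2n}$-module structure produced by $\pi^*$ lives on the \emph{same} underlying vector space as the given $DK_n$-module, with the generators $[\xi_i],[\overline{\xi}_i],[K]$ acting by the very operators $\xi_i,\overline{\xi}_i,K$. The immediate consequence is that any $\bC$-linear map which is $DK_n$-linear between two modules in $r(DK_n)_0$ is automatically $K_{2n}$-linear between their images under $\pi^*$, since $K_{2n}$ is generated by the images of these elements. This single observation drives both parts, so I would isolate it first.

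For the direct-sum statement, first I would check $[V_1],[V_2]\in r(DK_n)_0$. The $DK_n$-isomorphism $M\cong V_1\oplus V_2$ together with Lemma \ref{lem: WD} shows $V_1\oplus V_2\in r(DK_n)_0$, and evaluating annihilation by $K-\overline{K}$ on elements $(v_1,0)$ and $(0,v_2)$ shows it kills each summand separately. Then, since direct sums are componentwise on both sides, $\pi^*(V_1\oplus V_2)=\pi^*(V_1)\oplus\pi^*(V_2)$ holds literally as $K_{2n}$-modules; composing with the given isomorphism $M\cong V_1\oplus V_2$, which by the observation above is also $K_{2n}$-linear after applying $\pi^*$, yields $\pi^*(M)\cong\pi^*(V_1)\oplus\pi^*(V_2)$.

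For the tensor-product statement, the extra content is that $\pi^*$ is monoidal, i.e. $\pi^*(V_1\otimes V_2)\cong\pi^*(V_1)\otimes\pi^*(V_2)$ as $K_{2n}$-modules. Here I would use that $\pi$ is a Hopf algebra homomorphism: since $\Delta(K-\overline{K})=(K-\overline{K})\otimes K+\overline{K}\otimes(K-\overline{K})$, the ideal $\langle K-\overline{K}\rangle$ is a Hopf ideal, whence $(\pi\otimes\pi)\circ\Delta_{DK_n}=\Delta_{K_{2n}}\circ\pi$. Comparing the two $K_{2n}$-actions on the common underlying space $V_1\otimes V_2$ then reduces to checking on generators that $\Delta_{K_{2n}}([\xi_i])=[K]\otimes[\xi_i]+[\xi_i]\otimes 1$, and its analogues for $[\overline{\xi}_i]$ and $[K]$, produce the same operators as $\Delta_{DK_n}$ does on the $DK_n$-tensor product, so that the identity map on $V_1\otimes V_2$ intertwines the two structures.

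The hard part will be precisely this comparison for the generators $\overline{\xi}_i$, whose $DK_n$-coproduct involves $\overline{K}$ rather than $K$. This is exactly where the hypotheses $[V_1],[V_2]\in r(DK_n)_0$ are indispensable: on each factor $\overline{K}$ acts identically to $K$, so the operator assembled from $\Delta_{DK_n}(\overline{\xi}_i)$ using $\overline{K}$ agrees with the one assembled from $\Delta_{K_{2n}}([\overline{\xi}_i])$ using $[K]$. Without restricting to $r(DK_n)_0$ the two tensor-product actions could genuinely differ, so the crux is to track this coincidence carefully across all generators and conclude that the identity map on $V_1\otimes V_2$ is a $K_{2n}$-module isomorphism.
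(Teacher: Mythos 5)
Your proof is correct, and its skeleton matches the paper's: both rest on the observation that $\pi^*$ keeps the underlying vector space and lets the generators $[\xi_i],[\overline{\xi}_i],[K]$ act by the very operators $\xi_i,\overline{\xi}_i,K$, so that a $DK_n$-linear isomorphism between modules in $r(DK_n)_0$ is automatically $K_{2n}$-linear; the direct-sum half (restricting annihilation by $K-\overline{K}$ to each summand) is identical in both treatments. Where you go beyond the paper is the tensor case. The paper's proof consists only of the generator-intertwining computation $[g].\phi(m)=g.\phi(m)=\phi(g.m)=\phi([g].m)$, which tacitly identifies two a priori different $K_{2n}$-module structures on $V_1\otimes V_2$: the one obtained by applying $\pi^*$ to the $DK_n$-tensor product (generators acting through $\Delta_{DK_n}$) and the one obtained as the $K_{2n}$-tensor product $\pi^*(V_1)\otimes\pi^*(V_2)$ (generators acting through $\Delta_{K_{2n}}$). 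Your argument actually proves this identification: since $\Delta(K-\overline{K})=(K-\overline{K})\otimes K+\overline{K}\otimes(K-\overline{K})$, the ideal $\langle K-\overline{K}\rangle$ is a Hopf ideal, so $\pi$ is a Hopf algebra map, and on generators the only discrepancy---$\Delta_{DK_n}(\overline{\xi}_i)$ involving $\overline{K}$ where $\Delta_{K_{2n}}([\overline{\xi}_i])$ involves $[K]$---vanishes precisely because $\overline{K}$ acts as $K$ on the tensor factors. This is where the hypothesis $[V_1],[V_2]\in r(DK_n)_0$ does real work in the tensor case, a point the paper's three-line computation obscures; your version is the more complete argument, at the cost of a slightly longer verification.
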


\begin{proof}
First, note that if $M\simeq V_1 \oplus V_2$, then $K-\overline{K}$ is in the annihilator of every element in $M$, so it is also in the annihilator of every element in $V_1\times \{0\}$ and $\{0\} \times V_2$ which imply that $K-\overline{K}$ annihilates both $V_1$ and $V_2$ by the definition of a direct sum module. Therefore, both $[V_1],[V_2]\in r(DK_n)_0$.
 
Suppose $\phi$ is a $DK_n$ module isomorphism from $M$ to $V_1\oplus V_2$ or $V_1 \otimes V_2$ with $[M], [V_1], [V_2]\in r(DK_n)_0$. Let $m\in M$ and $g$ is a generator in $$\{[\xi_1],...,[\xi_n], [\overline{\xi}_1],...,[\overline{\xi}_n], [K]\}.$$ 
Then, \begin{align*} [g].\phi(m) &= g.\phi(m)\\
 &= \phi(g.m) \\
 &= \phi([g].m).
 \end{align*}
Thus, $\phi$ is a $K_{2n}$ module isomorphism as well.  
\end{proof}

\begin{rmk} 
Lemma 5 implies that if $M$ is a $DK_n$ module such that $[M]\in r(DK_n)_0$, then it is a decomposable module if and only it splits as a direct sum of two $DK_n$ modules $M_1\oplus M_2$ with both $[M_1],[M_2]\in r(DK_n)_0$. 
\end{rmk} \label{rem: indecomp.}

\begin{theorem}
Thus, the ring $R(K_{2n})$ is isomorphic to the subring $r(DK_n)_0\subset r(DK_n)$. Further, the indecomposable $K_{2n}$ modules are precisely the indecomposable $DK_n$ modules where $K$ and $\overline{K}$ define the same endomorphism. 
\end{theorem}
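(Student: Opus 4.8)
The plan is to assemble the lemmas already proved into a statement that $\pi_*$ and $\pi^*$ are mutually inverse ring homomorphisms, and then to transfer the notion of indecomposability across the resulting bijection. First I would recall that $r(DK_n)_0$ is a genuine subring by Lemmas \ref{lem: WD} and \ref{lem: subring}, that $\pi_*\colon r(K_{2n})\to r(DK_n)_0$ is a ring homomorphism by Lemma \ref{lem: addmult}, and that $\pi^*\colon r(DK_n)_0\to r(K_{2n})$ respects both $\oplus$ and $\otimes$ by Lemma \ref{lem: Dmultadd}, hence is a ring homomorphism as well. It then remains only to check that the two composites are the identity on isomorphism classes.

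For $\pi^*\circ\pi_*$, given a $K_{2n}$ module $V$, the map $\pi_*$ pulls it back along $\pi$ to a $DK_n$ module on which both $K$ and $\overline{K}$ act as $[K]$, and $\pi^*$ then reassigns to $[\xi_i],[\overline{\xi}_i],[K]$ the very same operators, returning $V$ unchanged. For $\pi_*\circ\pi^*$, given a $DK_n$ module $(M,\phi)$ with $\phi(K)=\phi(\overline{K})$, the module $\pi^*(\phi)$ has $[\xi_i],[\overline{\xi}_i],[K]$ acting by $\phi(\xi_i),\phi(\overline{\xi}_i),\phi(K)$; pulling back along $\pi$ then makes $\overline{K}$ act by $\pi(\overline{K})=[K]\mapsto\phi(K)=\phi(\overline{K})$, so the original $DK_n$-action is restored precisely because $\phi(K)=\phi(\overline{K})$. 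Hence $\pi_*$ and $\pi^*$ are inverse ring isomorphisms and $r(K_{2n})\cong r(DK_n)_0$.

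For the statement about indecomposables I would transfer decomposability across this bijection. If a $K_{2n}$ module $V$ splits as $V_1\oplus V_2$, then by Lemma \ref{lem: addmult} this is also a $DK_n$-module splitting of $\pi_*(V)$; conversely, if $\pi_*(V)$ splits as $M_1\oplus M_2$ as $DK_n$ modules, then both summands are annihilated by $K-\overline{K}$ and so lie in $r(DK_n)_0$, whence Lemma \ref{lem: Dmultadd} promotes this to a $K_{2n}$-module splitting $V\cong\pi^*(M_1)\oplus\pi^*(M_2)$. This is exactly the equivalence recorded in the Remark following Lemma \ref{lem: Dmultadd}. Therefore $V$ is $K_{2n}$-indecomposable if and only if $\pi_*(V)$ is $DK_n$-indecomposable, and since $\pi_*(V)$ is precisely a $DK_n$ module on which $K$ and $\overline{K}$ define the same endomorphism, the indecomposable $K_{2n}$ modules correspond exactly to the indecomposable $DK_n$ modules with this property.

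Almost all of this is bookkeeping, since the lemmas carry the real content. The one step that genuinely requires care — and where the hypothesis $[M]\in r(DK_n)_0$ is indispensable — is the verification that $\pi_*\circ\pi^*$ is the identity: without $\phi(K)=\phi(\overline{K})$ the pullback would assign $\overline{K}$ the action of $[K]$ rather than its original action, so the module structure would not be recovered. This is the crux I expect to be the main obstacle; everything else follows formally from composing the established maps.
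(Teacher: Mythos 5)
Your proposal is correct and follows essentially the same route as the paper: it assembles Lemmas \ref{lem: addmult} and \ref{lem: Dmultadd} into the statement that $\pi_*$ and $\pi^*$ are mutually inverse ring homomorphisms (the paper phrases the composite-identity check as the representatives having ``the same underlying abelian group and agreeing on generators,'' which is exactly your more explicit verification), and it deduces the indecomposability claim from the two-sided transfer of splittings, which is precisely the content of the Remark the paper cites.
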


\begin{proof}
    
By Lemma \ref{lem: Dmultadd} $\pi^*: r(DK_n)_0 \to r(K_{2n})$ is a ring homomorphism. Furthermore, $\pi^*\circ \pi_*([M]) = [M]$ for any $[M]\in r(K_{2n})$ since there are representatives of the two modules that have the same underlying abelian group and agree on the generators. Furthermore, $\pi_*\circ \pi^* ([M])=[M]$ for any $[M]\in r(DK_n)_0$ for the same reason. Thus, the ring $R(K_{2n})$ is isomorphic to the subring $r(DK_n)_0\subset r(DK_n)$. Note that Remark \ref{rem: indecomp.} implies that the indecomposable $K_{2n}$ modules are precisely the indecomposable $DK_n$ modules where $K$ and $\overline{K}$ define the same endomorphism. 
\end{proof}

\section{Green ring of $K_2$ and its tensor ideals}\label{sec: K2}

In this section, we determine the Green ring $r(K_2)$ of $K_2$ and also find their tensor ideals.  The deduction of the Green ring $r(K_2)$ from the Green ring $r(DK_1)$ of $DK_1$ in \cite{chen2014green} is based on our results in last section.  We will follow the notation in \cite{chen2014green}. 

In this section, $r,r'\in \mathbb{Z}_2=\{0,1\}$ with mod 2 addition, $s,n\in \mathbb{Z}_+$ are positive integers, and $\eta \in \mathbb{C}P^1=\mathbb{C}\cup \{\infty\}$ if not specified explicitly.  The parity function $p(n)=n$ mod $2$.

\subsection{Green ring $r(K_2)$ of $K_2$} 

\subsubsection{Hopf algebra $DK_1$}

The Hopf algebra $DK_1$ is a $16$-dimensional algebra generated as an algebra by $a,b,c,d$ such that 
$$a^2=0, d^2=0, b^2=1, c^2=1, ad+da=1-bc,$$ and 
$$ab=-bc,ac=-ca, bd=-db, cd=-dc,bc=cb.$$

The generators $b, c$ are group-like elements, and the coalgebra and antipode are further given as  
$$\Delta (a)=a\otimes b+1\otimes a, \Delta (d)=d\otimes c+1\otimes d, S(a)=-ab, S(d)=-dc.$$

\subsubsection{Indecomposable modules of $DK_1$}

The indecomposable modules of $DK_1$ consist of 4 types: irreducible representations $V(r)$ and the projective covers $P(r)$ of $V(r)$, and two infinite families of indecomposable modules of Syzygy type  $\Omega^sV(r), \Omega^{-s}V(r), s\in \mathbb{Z}_+$ and M-type  $M_n(r,\eta), n\in \mathbb{Z}_+, \eta \in \mathbb{C}P^1.$  There are also two Steinberg modules $V(2,r)$, which are both irreducible and projective.

\subsubsection{Indecomposable modules of $K_2$}

The indecomposable modules of $K_2$ are those of $DK_1$ such that $K, \bar{K}$ act the same way. The excluded indecomposable modules of $DK_1$ are the Steinberg modules $V(2,r)$ as we show below.

Consider the indecomposable modules as presented in \cite{chen2014green}. First, note that by the explicit formula for $V(1,r)$, each $V(r)=V(1,r)\in r(DK_1)_0$. Also, each $M_1(r,\infty)$ and $M_1(r,\eta)$ are in $r(DK_1)_0$. Furthermore, $P(r)\in r(DK_1)_0$ for $r\in \mbbZ/2\mbbZ$. However, $V(2,r)\notin r(DK_1)_0$. Finally, note that the minimal projective resolution of $V(r)$ is given by $$\cdots \rightarrow 4P(r+1) \rightarrow 3P(r) \rightarrow 2P(r+1) \rightarrow P(r) \rightarrow V(r) \rightarrow 0$$ and the minimal injective resolution of $V(r)$ is given by $$0\rightarrow V(r) \rightarrow P(r) \rightarrow 2P(r+1) \rightarrow 3P(r) \cdots $$
For $k\geq 1$, Since $\Omega^k V(r)$ is the kernel of a $DK_1$ map from $kP(r+p(k)-1)$  and $K-\overline{K}$ is in the annihilator of every element, then $K-\overline{K}$ is in the annihilator of every element in the kernel as well, so $\Omega^k V(r) \in r(DK_1)_0$. Further, since $\Omega^{-k} V(r)$ is the cokernel of a map from into $kP(r+p(k)-1)$, then since $K-\overline{K}$ annihilates all elements of $P(r)$, then it will annihilate all elements of the cokernel. Thus, $\Omega^{-k} \in r(DK_1)_0$ as well. Also, note that while $V(2,0)$ is not in $r(DK_1)_0$, $V(2,0)\otimes V(2,0)=P(1)$ is in $r(DK_1)_0$. Finally, note that $M_n(r,\eta)$ is a submodule of $nP(r)$, so it is also in $r(DK_n)_0$.

We separate the indecomposable modules of $K_2$ into three types: 
the simple-projective $\{V(r), P(r)\}$, the Syzygy type $\{\Omega^{s}V(r),\Omega^{-s}V(r)\}$, and the continuous M-type $\{M_n(r,\eta)\}$.  The dimensions of these modules are:

$$\dim V(r)=1, \dim P(r)=4, \dim \Omega^{\pm s}V(r)=2s+1, \dim  M_n(r,\eta)=2n. $$

We deduce the following from \cite{chen2014green}.

\begin{theorem}

The full set of indecomposable modules of $K_2$ is
$$\{V(r), P(r), \Omega^{s}V(r),\Omega^{-s}V(r), M_n(r,\eta)\},$$ where $r\in \mbbZ_2, s, n\in \mathbb{Z}_+, \eta \in \mbbC P^1$.

The fusion rules of all the indecomposable modules of $K_2$ are as follows. The first set is the full fusion ring of simples $V(r)$ and projectives $P(r)$.

\begin{enumerate}

\item $V(r)^*=V(r), P(r)^*=P(r),$
\item $V(r)\otimes V(r')=V(r+r'),\;\;\; V(r)\otimes P(r')=P(r+r'),\;\;\; P(r)\otimes P(r')=2P(0)\oplus 2P(1).$
\end{enumerate}

The rest of the fusion rules are:
\begin{enumerate}
\item $(\Omega^{s}V(r))^*=\Omega^{-s}V(r), (M_n(r,\eta))^*=M_n(r+1,\eta),$
\item $V(r)\otimes \Omega^{s}V(r')=\Omega^{s}V(r+r'), V(r)\otimes \Omega^{-s}V(r')=\Omega^{-s}V(r+r'), V(r)\otimes M_n(r',\eta)=M_n(r+r',\eta),$
\item $P(r)\otimes \Omega^{s}V(r')=P(r)\otimes \Omega^{-s}V(r')=\oplus sP(r+r'+p(s+1))\oplus (s+1)P(r+r'+p(s)), P(r)\otimes M_n(r',\eta)=\oplus nP(0)\oplus nP(1)$.
\item $\Omega^{s}V(r)\otimes M_n(r',\eta)=M_n(r+r'+p(s),\eta)\oplus snP(r+r'+p(s+1))$, $\Omega^{-s}V(r)\otimes M_n(r',\eta)=M_n(r+r'+p(s+1),\eta)\oplus snP(r+r'+p(s))$
\item $\Omega^{s}V(r)\otimes \Omega^{n}V(r')=\Omega^{s+n}V(r+r')\oplus snP(r+r'+p(s+n))$, $\Omega^{s}V(r)\otimes \Omega^{-n}V(r')=\Omega^{s-n}V(r+r')\oplus (s+1)nP(r+r'+p(s+n+1))$ if $s\geq n$, and $\Omega^{s}V(r)\otimes \Omega^{-n}V(r')=\Omega^{s-n}V(r+r')\oplus (n+1)sP(r+r'++p(s+n+1))$ otherwise.
\item $M_s(r,\eta)\otimes M_n(r',\eta')=\oplus sn P(r+r')$ if $\eta\neq \eta'$, $M_s(r,\eta)\otimes M_n(r',\eta)=\oplus s(n-1) P(r+r')\oplus M_s(0,\eta)\oplus M_s(1,\eta) $ if $s\leq n$.

\end{enumerate}

\end{theorem}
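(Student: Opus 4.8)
The plan is to reduce everything to the structure of $r(DK_1)$ computed in \cite{chen2014green} via the dictionary established in Section~\ref{sec: nicholas}. Specializing the main theorem of that section to $n=1$, the indecomposable $K_2$-modules are exactly the indecomposable $DK_1$-modules on which $K$ and $\overline{K}$ (equivalently the group-likes $b$ and $c$) act by the same endomorphism, and there is a ring isomorphism $r(K_2)\cong r(DK_1)_0$. Thus I never need to compute anything intrinsically for $K_2$: both the list of indecomposables and the fusion coefficients can be read off from $DK_1$ by restricting to $r(DK_1)_0$.

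First I would settle the classification of objects. From \cite{chen2014green} the complete list of indecomposable $DK_1$-modules is $V(r)$, $P(r)$, $\Omega^{\pm s}V(r)$, $M_n(r,\eta)$, together with the two Steinberg modules $V(2,r)$. The membership analysis carried out just before the statement shows that every type except the Steinbergs lies in $r(DK_1)_0$: the simples are $V(r)=V(1,r)$ by the explicit action; the $P(r)$ lie in $r(DK_1)_0$ directly; each $\Omega^{\pm s}V(r)$ is a kernel or cokernel of a $DK_1$-map between sums of the $P(r)$, hence is annihilated by $K-\overline{K}$; and each $M_n(r,\eta)$ is a submodule of $nP(r)$. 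Since $V(2,r)\notin r(DK_1)_0$, the dictionary gives precisely the stated list $\{V(r),P(r),\Omega^{\pm s}V(r),M_n(r,\eta)\}$, and the dimension formulas follow from those in \cite{chen2014green}, the excluded Steinbergs being the $2$-dimensional simples.

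For the fusion rules I would take the $DK_1$-products from \cite{chen2014green} and restrict to pairs of modules lying in $r(DK_1)_0$. The key point is that no Steinberg module can appear in such a product: by Lemma~\ref{lem: subring} the set $r(DK_1)_0$ is closed under $\otimes$, and by Lemma~\ref{lem: Dmultadd} together with Remark~\ref{rem: indecomp.} every indecomposable summand of a module in $r(DK_1)_0$ is again in $r(DK_1)_0$; since the Krull--Schmidt decomposition is unique and $V(2,r)\notin r(DK_1)_0$, the Steinbergs are automatically absent. Because $\pi^*$ and $\pi_*$ are inverse ring homomorphisms, the $K_2$-tensor product of two such modules agrees with the $DK_1$-tensor product, so each listed product decomposes exactly as the corresponding product in $DK_1$. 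The duality formulas $(\Omega^sV(r))^*=\Omega^{-s}V(r)$ and $(M_n(r,\eta))^*=M_n(r+1,\eta)$ transport the same way: since the antipode fixes the group-likes, the contragredient of a module in $r(DK_1)_0$ stays in $r(DK_1)_0$, and $K_2$-duals coincide with $DK_1$-duals on these modules by Lemma~\ref{lem: addmult}.

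The main obstacle is not conceptual but bookkeeping: aligning the indexing conventions of \cite{chen2014green} with the parity functions $p(s)$ and $p(s+1)$ appearing in items (3)--(6), and verifying that the case splits (for instance $s\ge n$ versus $s<n$ in the $\Omega^sV(r)\otimes\Omega^{-n}V(r')$ rule, and $\eta=\eta'$ versus $\eta\ne\eta'$ in the M-type products) reproduce the stated coefficients after restriction to $r(DK_1)_0$. Each such product can be cross-checked by comparing total dimensions using the formulas $\dim V(r)=1$, $\dim P(r)=4$, $\dim\Omega^{\pm s}V(r)=2s+1$, and $\dim M_n(r,\eta)=2n$, which I would use as the final consistency test for every line of the fusion table.
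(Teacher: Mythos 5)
Your proposal is correct and follows essentially the same route as the paper: both invoke the Section~\ref{sec: nicholas} dictionary ($r(K_2)\cong r(DK_1)_0$, with indecomposable $K_2$-modules being exactly the indecomposable $DK_1$-modules on which $K$ and $\overline{K}$ agree), carry out the same membership analysis excluding only the Steinberg modules $V(2,r)$, and then import the fusion rules from \cite{chen2014green}. Your explicit justification that no Steinberg summand can appear in a product of modules in $r(DK_1)_0$ (closure under $\otimes$ plus Lemma~\ref{lem: Dmultadd} and Krull--Schmidt) is a point the paper leaves implicit, and is a welcome addition rather than a deviation.
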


\subsubsection{Green ring of $DK_1$}

Let $\mbbZ[X]$ be the polynomial algebra generated over $\mbbZ$ generated by $$X=\{g_1,x_1,y_1,z_1, X'_{n,\eta}| n\geq 1, \eta\in \mbbC P^1\}$$ and let $J$ be the ideal generated by the following polynomials as given in \cite{chen2014green}:

$g_1^2-1$, $x_1^3-2x_1(1+g_1)$, $x_1(y_1-1-2g_1)$, $x_1(y_1-z_1)$, $y_1z_1-1-2x_1^2$, $x_1X'_{n,\eta}-n(1+g_1)x_1$, $y_1X'_{n,\eta}-ng_1x_1^2-g_1X'_{n,\eta}$, $z_1X'_{n,\eta}-nx_1^2-g_1X'_{n,\eta}$, $X'_{n,\eta}X'_{s,\alpha}-nsg_1x_1^2$, $X'_{n,\eta}X'_{t,\eta}-n(t-1)g_1x_1^2-X'_{n,\eta}-g_1X'_{n,\eta}$,where $n,s,t\geq 1$ with $t\geq n$, $\eta, \alpha \in \mbbC P^1$ with $\eta\neq \alpha$.

Then the Green ring of $DK_1$ is  $r(DK_1)\cong\mbbZ[{X}]/J$.

\subsubsection{Green ring of $K_2$}

Let $\mbbZ[\tilde{X}]$ be the subring of $\mbbZ[X]$ generated by $$\tilde{X}=\{g_1,x_1^2,y_1,z_1, X'_{n,\eta}| n\geq 1, \eta\in \mbbC P^1\}.$$

It follows that the Green ring of $K_2$ is:

\begin{theorem}
As a ring, $r(K_2)\cong\mbbZ[\tilde{X}]/J$.
\end{theorem}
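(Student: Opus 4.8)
The plan is to combine the two structural results already established. By the theorem of Section~\ref{sec: nicholas}, $r(K_2)$ is isomorphic to the subring $r(DK_1)_0\subset r(DK_1)$ of classes of those indecomposables on which $K$ and $\overline K$ act identically, and by the quoted result of \cite{chen2014green} we have $r(DK_1)\cong\mbbZ[X]/J$. Reading $\mbbZ[\tilde X]/J$ as the subring of $\mbbZ[X]/J$ generated by the images of $\tilde X=\{g_1,x_1^2,y_1,z_1,X'_{n,\eta}\}$, which I denote $\langle\tilde X\rangle$, the whole statement reduces to the single ring-theoretic identity $r(DK_1)_0=\langle\tilde X\rangle$ inside $\mbbZ[X]/J$. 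First I would record the easy inclusion $\langle\tilde X\rangle\subseteq r(DK_1)_0$: the classes $g_1,y_1,z_1,X'_{n,\eta}$ are represented by non-Steinberg indecomposables (the simple $V(r)$, the syzygy modules $\Omega^{\pm1}V(r)$, and the $M$-type modules), hence lie in $r(DK_1)_0$ by the computations of Section~\ref{sec: K2}, while $x_1^2=[V(2,0)\otimes V(2,0)]=[P(1)]\in r(DK_1)_0$ even though $x_1=[V(2,0)]\notin r(DK_1)_0$.

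The reverse inclusion is the substantive point, and I would obtain it from a $\mbbZ/2$-grading. The element $\omega=K\overline K$ is a central group-like of order two, so $\Delta(\omega)=\omega\otimes\omega$ and $\omega^2=1$; hence on any indecomposable $M$ it acts as a scalar $\epsilon(M)\in\{\pm1\}$, since its $\pm1$-eigenspaces are submodules, and $\epsilon$ is multiplicative under $\otimes$. This equips $r(DK_1)$ with a $\mbbZ/2$-grading whose degree-zero part is exactly $r(DK_1)_0$, because $\omega$ acts as $+1$ on $M$ iff $\overline K=K^{-1}=K$ on $M$, i.e. iff $[M]\in r(DK_1)_0$. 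I would then match this grading with the purely algebraic one on $\mbbZ[X]/J$ that assigns $\deg x_1=1$ and degree $0$ to $g_1,y_1,z_1,X'_{n,\eta}$: a direct check shows every generator of $J$ listed in Section~\ref{sec: K2} is homogeneous for this assignment (the cubic $x_1^3-2x_1(1+g_1)$ is homogeneous of degree $1$, the relation $y_1z_1-1-2x_1^2$ of degree $0$, and so on), so the assignment descends to a grading of $\mbbZ[X]/J$. The two gradings agree on generators, as $x_1=[V(2,0)]$ has $\epsilon=-1$ while the remaining generators lie in $r(DK_1)_0$, and a $\mbbZ/2$-grading of a commutative ring is determined by the degrees of a generating set, so the two gradings coincide.

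With the gradings identified, every indecomposable class is homogeneous and $r(DK_1)_0$ equals the degree-zero subring. A monomial in $g_1,x_1,y_1,z_1,X'_{n,\eta}$ has degree $0$ precisely when $x_1$ occurs to an even power, and such monomials are exactly the elements of $\langle g_1,x_1^2,y_1,z_1,X'_{n,\eta}\rangle=\langle\tilde X\rangle$. Hence $r(DK_1)_0=\langle\tilde X\rangle$, and composing with the isomorphism of Section~\ref{sec: nicholas} yields $r(K_2)\cong\mbbZ[\tilde X]/J$.

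The main obstacle is precisely this reverse inclusion: ruling out any class on which $K$ and $\overline K$ agree that secretly requires an odd power of $x_1$. The grading argument settles it cleanly, but it rests on two verifications that must not be skipped: the homogeneity of every relation generating $J$, and the fact that $\omega=K\overline K$ acts by a single scalar on each indecomposable, so that the representation-theoretic and algebraic gradings genuinely match. Everything else (the forward inclusion and the arithmetic $V(2,0)\otimes V(2,0)=P(1)$) is the routine bookkeeping already assembled in Section~\ref{sec: K2}.
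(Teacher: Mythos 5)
Your proof is correct, and it reaches the substantive inclusion $r(DK_1)_0\subseteq\overline{f}(\mbbZ[\tilde{X}]/J)$ by a genuinely different mechanism than the paper. The paper argues module-by-module: quoting the explicit identities $[\Omega^nV(0)]=y^n-f_nx^2$ and $[\Omega^{-n}V(0)]=z^n-f_nx^2$ from \cite{chen2014green} (Lemmas 3.2--3.4), together with $x^2=[P(1)]$, $g=[V(1)]$, $gX_{n,\eta}=[M_n(1,\eta)]$, and tensoring with $V(1)$ to reach the remaining parities, it exhibits every indecomposable class lying in $r(DK_1)_0$ explicitly as a polynomial in $\tilde{X}$; combined with the easy reverse inclusion and $\pi^*$ this gives the isomorphism. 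You instead observe that $\omega=K\overline{K}$ is a central group-like involution, hence acts by a sign on each indecomposable, yielding a ring $\mbbZ/2$-grading of $r(DK_1)$ whose even part is exactly $r(DK_1)_0$; since every listed generator of $J$ is homogeneous for $\deg x_1=1$, $\deg g_1=\deg y_1=\deg z_1=\deg X'_{n,\eta}=0$, the algebraic grading descends to $\mbbZ[X]/J$, and because the two gradings share the generating set $\{g_1,x_1,y_1,z_1,X'_{n,\eta}\}$ of elements homogeneous for both with equal degrees, they coincide, so the even part $\langle\tilde{X}\rangle$ of $\mbbZ[X]/J$ maps precisely onto $r(DK_1)_0$. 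Your route never needs to know how $[\Omega^{\pm n}V(r)]$ expands in the generators and generalizes to any Hopf algebra carrying a central group-like involution, while the paper's route is more computational but yields the explicit polynomial expressions for the syzygy classes, which are convenient elsewhere (e.g.\ in the tensor-ideal analysis of Section 5). One small point of rigor: your phrase that a $\mbbZ/2$-grading is ``determined by the degrees of a generating set'' should be stated as the precise claim your monomial argument actually proves --- two gradings admitting a common generating set of elements homogeneous for both, with equal degrees, must coincide, since any element is a $\mbbZ$-combination of monomials in these generators and grouping monomials by parity produces the (unique) homogeneous decomposition for either grading simultaneously.
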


\begin{proof}
Let $g=[V(1)]$, $x=[V(2,0)]$, $y=[\Omega V(0)]$, $z=[\Omega^{-1} V(0)]$, and $X_{n,\eta}= M_n(0,\eta)$. Let $f: \mbbZ[X] \to r(DK_1)$ given by $$f(g_1)=g \tab f(x_1)=x \tab f(y_1)=y \tab f(z_1) = z \tab f( X'_{n,\eta}) = X_{n,\eta}$$ as in \cite{chen2014green}. Also, since $f(g)=0$ for all $g\in J$, $f$ induces a ring epimorphism $\overline{f}:\mbbZ[X]/J\to r(DK_1)$ such that $\overline{f}(\overline{u})=f(u)$ for all $u\in \mbbZ[X]$. By \cite[Theorem 3.9]{chen2014green}, this map is an isomorphism of rings. First, we show $r(DK_1)_0\subseteq \overline{f}(\mbbZ[\tilde{X}]/J)$. Let $$f_n = a_n(1+g)-\frac{n(n-1)}{2} g^n$$ where $a_n\in \mbbZ$ is defined by \cite[Lemma 3.2]{chen2014green}. First, note that by \cite[Lemma 3.3 and Lemma 3.4]{chen2014green} $$y^n-f_n x^2=[\Omega^nV(0)] \in f(\mbbZ[\tilde{X}])$$ and $$z^n-f_n x^2=[\Omega^{-n}V(0)] \in f(\mbbZ[\tilde{X}]).$$ Also, $$f(x'^2)=[P(r)] \tab f(g)=[V(1)] \tab f(g' X'_{n,\eta})=[M_n(1,\eta)]$$ are all elements of $\overline{f}(\mbbZ[\tilde{X}]/J)$. Since $V(1) \otimes \Omega^k V(0) \simeq \Omega^k V(1)$ for any $k\in \mbbZ$ and $V(1) \otimes M_n(1,\eta)\simeq M_n(0,\eta)$, then the isomorphism classes of all indecomposable modules where $K$ and $\overline{K}$ act in the same way, which generate $r(DK_1)_0$ as a ring, are in $\overline{f}(\mbbZ[\tilde{X}]/J)$. Thus, $r(DK_1)_0\subseteq \overline{f}(\mbbZ[\tilde{X}]/J)$. 

Further, since for each of these generators, $f(g'), f(x'^2), f(y'), f(z')\in r(DK_1)_0$ and for each $X'_{n,\eta}$, $f(X'_{n,\eta})=X_{n,\eta}=[M_n(0,\eta)]\in r(DK_1)_0$. Thus, since the generators map into the subring $r(DK_1)_0$, then the entire image of the ring $f(\mbbZ[\tilde{X}]) \subseteq r(DK_1)_0$. Thus $\overline{f}(\mbbZ[\tilde{X}]/J) \subseteq r(DK_1)_0$. Therefore, the map $\overline{f}$ is an isomorphism from the subring $\mbbZ[\tilde{X}]/J$ to $r(DK_1)_0$, so by composing with the isomorphism $\pi^*$, we have an isomorphism between $\mbbZ[\tilde{X}]/J$ and $r(K_2)$.
\end{proof}

\subsection{Tensor ideals of the Representation Category of $K_2$}
Recall for any projective $P\in \Rep(K_2)$ and indecomposable $A\in \Rep(K_2)$, $A\otimes P$ is projective. Also, $P(0)$ and $P(1)$ are the projective modules of $K_2$ from Section \ref{sec: Projectives of Kn}. Further, note that by Propositions 2.4 ($V(r')$), 2.25 and 2.33 $(M_n(r',\eta')$), Corollary 2.22 $(\Omega^n(V(r))$, Corollary 2.24 $(\Omega^{-n}(V(r))$ and Corollary 2.10 ($P(r')$) of \cite{chen2014green}, for any indecomposable $A\in \Rep(K_2)$,  $$A\otimes M_n(r,\eta)\simeq c_1 M_k(0,\eta) \oplus c_2 M_k(1,\eta) \oplus c_3 P(0) \oplus c_4(P(1))$$ where $k\leq n$. Further, in $\Rep(DK_1)$ since by the same propositions,  for any indecomposable $A$,  $$A\otimes M_n(r,\eta)\simeq c_1 M_k(0,\eta) \oplus c_2 M_k(1,\eta) \oplus c_3 P(0) \oplus c_4(P(1))\oplus c_5 V(2,0) \oplus c_6 V(2,1)$$ for some $k\leq n$.  \\

\begin{theorem}\label{theorem: additive tensor ideals}
The proper additive tensor ideals of $\Rep(K_2)$ are in bijection with functions $f: \mathbb{C} P^1 \to \mbbZ_+ \cup \{\infty\}$ via the map $\phi$ where $\phi(f)$ is the finite additive completion of $\{M_k(0,\eta), M_k(1,\eta), P(0), P(1) : k\in \mbbZ_+, f(\eta)>k\}$. 
\end{theorem}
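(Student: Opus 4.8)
The plan is to realize $\phi$ as a bijection by constructing an explicit inverse and checking it on both sides. To a nonzero proper additive tensor ideal $\mcI$ I would associate the function $f_\mcI(\eta)=1+\sup\{k\in\mbbZ_+ : M_k(0,\eta)\in\mcI\}$, with the conventions $f_\mcI(\eta)=1$ when the set is empty and $f_\mcI(\eta)=\infty$ when it is all of $\mbbZ_+$, and then prove $\mcI=\phi(f_\mcI)$ together with $f_{\phi(f)}=f$. Observe at the outset that the minimal ideal in the image, $\phi(f)$ with $f\equiv 1$, is the projective ideal $\mcP_{\Rep(K_2)}$; the zero subcategory is degenerate and falls outside the parameterization, so throughout I read ``proper'' as ``nonzero and proper.''

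The first structural input is a reduction to two classes of indecomposables. Using rigidity together with the fusion rules, I would show that a proper ideal can contain neither $V(r)$ nor $\Omega^{\pm s}V(r)$: since $V(0)$ is the unit and $V(1)\otimes V(1)=V(0)$, any $V(r)$ already forces $\mcI=\mcC$; and by fusion rule (5) the product $\Omega^{s}V(r)\otimes\Omega^{-s}V(r)$ contains $\Omega^{0}V(0)=V(0)$ as a summand, so after closing under summands any ideal meeting the $\Omega$-family is improper. Hence the only indecomposables available to a proper ideal are the projectives $P(r)$ and the $M$-type modules $M_k(r,\eta)$. Next, tensoring $M_k(r,\eta)$ with $M_1(0,\eta')$ for some $\eta'\neq\eta$ yields $kP(r)$ by fusion rule (6), so every nonzero proper ideal contains $P(0)$, hence also $P(1)=V(1)\otimes P(0)$. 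Finally $V(1)\otimes M_k(0,\eta)=M_k(1,\eta)$ shows the two parities always enter together, so membership of $M$-modules is governed entirely by the $\eta$-indexed sets $S_\eta=\{k: M_k(*,\eta)\in\mcI\}$.

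The heart of the argument is the level calculus for the $M$-modules. On one side, the fusion computation displayed just before the theorem shows that for any indecomposable $A$, every $M$-summand of $A\otimes M_k(r,\eta)$ carries the same parameter $\eta$ and level $\le k$, the remaining summands being projective; this yields that $\phi(f)$ is closed under tensoring, and since it is a finite additive completion of indecomposables it is closed under summands by Krull--Schmidt, so it is a genuine additive tensor ideal, proper because it omits $V(r)$ and $\Omega^{\pm s}V(r)$. On the other side, fusion rule (6) with equal parameter gives $M_k(r',\eta)\otimes M_N(r,\eta)\supseteq M_k(0,\eta)\oplus M_k(1,\eta)$ whenever $k\le N$, so each $S_\eta$ is downward closed: $M_N(*,\eta)\in\mcI$ forces $M_k(*,\eta)\in\mcI$ for all $k\le N$. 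Thus $S_\eta=\{k:k<f_\mcI(\eta)\}$, and combining this with the reduction step gives $\mcI=\phi(f_\mcI)$, while $f_{\phi(f)}=f$ and the injectivity of $\phi$ follow by unwinding the definitions. I expect the main obstacle to be precisely this level bookkeeping: verifying uniformly across all the fusion rules that tensoring never raises the level nor alters $\eta$ on the $M$-part, that same-$\eta$ products descend to every lower level, and that differing-$\eta$ products deliver exactly the projectives needed to pin the ideal down.
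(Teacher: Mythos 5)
Your proposal is correct and follows essentially the same route as the paper's proof: rule out $V(r)$ and $\Omega^{\pm s}V(r)$ from any proper ideal via the fusion rule $\Omega^{s}V\otimes\Omega^{-s}V \supseteq$ (a simple object), use the same-$\eta$ fusion rule to get downward closure and parity closure of the $M$-levels, and recover $f$ from the ideal as $1+\sup\{k: M_k(0,\eta)\in\mcI\}$. The only differences are organizational (you build a two-sided inverse rather than checking injectivity and surjectivity separately, you derive the closure facts from the fusion rules stated in Theorem 5.1 instead of citing the corresponding results of Chen et al., and you make explicit the paper's implicit exclusion of the zero ideal and its unproved assertion that every nonzero ideal contains the projectives), none of which changes the substance of the argument.
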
 

\begin{proof}
Since a retract of a finite direct sum of elements of a set is another finite direct sum of elements a set, $\phi(f)$ is closed under retracts and is additive. By above, $\phi(f)$ is also closed under tensor products. To prove $\phi$ is injective, note that if two functions $f,f'$ are distinct, then without loss of generality, $f(\eta)>f'(\eta)$ for some $\eta\in \mbbC P^1$. Then, letting $n=f'(\eta)+1$, then $M_{n}(0,\eta)\in \phi(f)$ but not in $\phi(f')$ so the map is injective.\\ To prove that $\phi$ surjective, let $\mcT$ be an additive tensor ideal. Since any tensor ideal contains the projective ideal, $P(0),P(1)$ are both in $\mcT$. Suppose the only other indecomposables in $\mcT$ are of the form $M_n(r,\eta)$. If $M_n(r,\eta)\in \mcT$, then \cite[Corollary 2.22]{chen2014green} implies that $M_n(r+1,\eta)\in \mcT$ as well. Also, for each $k<n$, \cite[Proposition 2.33]{chen2014green} implies that $M_k(r,\eta)\in \mcT$ as well. Thus, since additive tensor ideals are closed under retracts and direct sums, they are fully classified by their indecomposable objects. Then, if we define $f(\eta)= \sup\{n: M_n(0,\eta)\in \mcT\}+1$ where the empty supremum is taken to be zero, then since this implies that $\mcT$ and $\phi(f)$ have the same indecomposable objects, then they must be the same tensor ideal. Now, instead suppose $\mcT$ has some other indecomposable object. \\ If either $\Omega^n(V(r))$ or $\Omega^{-n} V(r) \in \mcT$. Then, by \cite[Proposition 2.20]{chen2014green}, $$\Omega^n(V(r)) \otimes \Omega^{-n}(V(r+1))\simeq V(0) \oplus (n^2+n)P(1),$$ so $\mcT$ contains the tensor unit, so it is not proper. If $\mcT$ contains either simple object, then \cite[Proposition 2.1]{chen2014green} implies that the tensor unit is in $\mcT$ as well. Therefore, if $\mcT$ is a proper tensor ideal, the only indecomposables it contains are projective or $M_n(r,\eta)$, so thus each $\mcT$ is the image of some $f$.
\end{proof}

\begin{corollary}
The proper additive tensor ideals of $\Rep(K_2)$ are in bijection with functions $f: \mathbb{C} P^1 \to \mbbZ_+ \cup \{\infty\}$ via the map $\phi$ where $\phi(f)$ is the finite additive completion of $\{M_k(0,\eta), M_k(1,\eta), P(0), P(1), P(2,0), P(2,1) : k\in \mbbZ_+, f(\eta)>k\}$. \end{corollary}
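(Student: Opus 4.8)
The plan is to deduce this directly from Theorem~\ref{theorem: additive tensor ideals}, rerunning its three-part argument---well-definedness and injectivity of $\phi$, closure of each $\phi(f)$ under $\otimes$, and surjectivity of $\phi$---with the generating set enlarged by the two projective Steinberg modules $P(2,0)$ and $P(2,1)$. For well-definedness and injectivity I would argue exactly as before: a retract of a finite direct sum of the listed generators is again a finite direct sum of those generators, so each $\phi(f)$ is additive and idempotent complete, and if $f\neq f'$ then some index $n$ with $f(\eta)>n\ge f'(\eta)$ exhibits $M_n(0,\eta)\in\phi(f)\setminus\phi(f')$, giving injectivity. Adjoining $P(2,0),P(2,1)$ changes nothing here, as both are projective and hence automatically members of every tensor ideal.

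For closure under $\otimes$ I would invoke the decomposition recalled just before the statement: for any indecomposable $A$, the product $A\otimes M_n(r,\eta)$ is a direct sum of copies of $M_k(0,\eta),M_k(1,\eta)$ with $k\le n$ together with a projective summand built from $P(0),P(1),P(2,0),P(2,1)$. Whenever $f(\eta)>n$, every one of these summands---the lower-index $M$-modules at the same point $\eta$ and all four projectives---already lies in the generating set of $\phi(f)$, so $A\otimes M_n(r,\eta)\in\phi(f)$. The precise role of the new generators $P(2,0),P(2,1)$ is to absorb the Steinberg summands $c_5\,V(2,0)\oplus c_6\,V(2,1)$ that occur in these products, which is exactly why the generating set must be enlarged relative to Theorem~\ref{theorem: additive tensor ideals}.

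For surjectivity I would take a proper additive tensor ideal $\mcT$ and first note that it contains the entire projective ideal, hence $P(0),P(1),P(2,0),P(2,1)$. Next I would eliminate every non-$M$ indecomposable: if some Syzygy module $\Omega^{\pm n}V(r)$ lay in $\mcT$, then the fusion rule $\Omega^nV(r)\otimes\Omega^{-n}V(r+1)\simeq V(0)\oplus(n^2+n)P(1)$ would place the unit $V(0)$ in $\mcT$, and a simple $V(r)$ would do likewise, contradicting properness in either case. Thus the only non-projective indecomposables in $\mcT$ are of $M$-type; using downward closure of the $M$-index (Proposition~2.33 of \cite{chen2014green}) and the parity symmetry $V(1)\otimes M_n(r,\eta)=M_n(r+1,\eta)$, I would set $f(\eta)=\sup\{n:M_n(0,\eta)\in\mcT\}+1$ (with empty supremum zero) and conclude that $\mcT$ and $\phi(f)$ share the same indecomposables, hence coincide.

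The step I expect to be the main obstacle is the bookkeeping in the closure argument: confirming that the projective part of $A\otimes M_n(r,\eta)$ is generated precisely by the four projectives $P(0),P(1),P(2,0),P(2,1)$ and contains no stray Syzygy or simple indecomposable that would escape $\phi(f)$. This is the only place where the Corollary genuinely differs from Theorem~\ref{theorem: additive tensor ideals}; once it is checked against the relevant propositions of \cite{chen2014green}, the remainder of the proof is a direct transcription of that theorem.
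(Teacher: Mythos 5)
Your proposal is correct and takes essentially the same route as the paper: the paper's own proof consists precisely of the observation that the Steinberg modules $V(2,0),V(2,1)$ (the $P(2,0),P(2,1)$ of the statement) are projective, hence automatically contained in every tensor ideal, after which injectivity and surjectivity of $\phi$ follow immediately from Theorem \ref{theorem: additive tensor ideals}. Your rerun of the theorem's three-part argument---with the new projective generators absorbing the Steinberg summands $c_5V(2,0)\oplus c_6V(2,1)$ in the $\Rep(DK_1)$ decomposition---is simply a more explicit transcription of that same deduction.
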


\begin{proof}
Both $V(2,0),V(2,1)$ are in the projective ideal, so they must be contained in any tensor ideal. The injectivity and surjectivity of $\phi$ both immediately follow from Theorem \ref{theorem: additive tensor ideals}.
\end{proof}
\begin{corollary}
    A tensor ideal $\mcT$ of $\Rep(K_2)$ is fully characterized by the set $A_{\mcT}$ where $A_\mcT$ is a subset of the set of all finite subsets of $\mathbb{Z}_+ \times \mathbb{C} P^1 \times \mathbb{Z}_+\times\{0,1\}$ where $T\in A_{\mcT}$ if and only if $\sum\limits_{(n,\eta,k,r)\in T} kM_n(r,\eta)\in \mcT$.
\end{corollary}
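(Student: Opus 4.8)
The plan is to prove that the assignment $\mcT \mapsto A_{\mcT}$ is injective on proper tensor ideals by recovering the membership predicate of $\mcT$ from the combinatorial data $A_{\mcT}$; the improper ideal $\mcC$ is the unique tensor ideal containing a simple object and is recorded separately. First I would recall, from the proof of Theorem \ref{theorem: additive tensor ideals}, that a proper tensor ideal contains no simple and no syzygy module, so by closure under direct summands every object of such a $\mcT$ decomposes (uniquely up to isomorphism, by Krull--Schmidt) as $X \cong X_M \oplus X_P$, where $X_M$ is a finite direct sum of $M$-type modules and $X_P$ is projective. Thus it suffices to understand how membership in $\mcT$ depends on the pair $(X_M, X_P)$.

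The key step is a \emph{free projectives} lemma: for a proper tensor ideal $\mcT$, an object $X \cong X_M \oplus X_P$ lies in $\mcT$ if and only if $X_M \in \mcT$. The forward direction is just closure under direct summands. For the converse, if $X_M = 0$ then $X$ is projective and lies in $\mcT$ since every tensor ideal contains the projective ideal. Otherwise I would write $X_M \cong \bigoplus_j k_j M_{n_j}(r_j,\eta_j)$ and apply the computed fusion rule $P(r)\otimes M_n(r',\eta) \cong nP(0)\oplus nP(1)$, which gives $X_M \otimes P(0)^{\oplus t} \cong tN\,(P(0)\oplus P(1))$ with $N=\sum_j k_j n_j \geq 1$. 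Since $V(0)$ is the tensor unit, tensoring $X_M$ with the object $V(0)\oplus P(0)^{\oplus t}$ of $\mcC$ and invoking the weak-ideal property yields $X_M \oplus tN\,(P(0)\oplus P(1)) \in \mcT$. Closure under direct summands then peels off any desired projective block, so $X_M \oplus aP(0) \oplus bP(1) \in \mcT$ for all $a,b\geq 0$ (choose $t$ with $tN\geq a,b$); in particular $X_M\oplus X_P \in \mcT$.

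With the lemma in hand, the corollary follows formally. For any object $X$ we have $X\in\mcT$ iff $X$ has only projective and $M$-type indecomposable summands and $X_M\in\mcT$, and $X_M\in\mcT$ iff the finite multiset of indecomposable $M$-type summands of $X_M$, encoded as $T=\{(n_j,\eta_j,k_j,r_j)\}\subset \mbbZ_+\times\mbbC P^1\times\mbbZ_+\times\{0,1\}$, belongs to $A_{\mcT}$. Hence the full membership predicate of $\mcT$ is reconstructed from $A_{\mcT}$ alone, which proves that $A_{\mcT}$ is a complete invariant of the proper tensor ideal $\mcT$.

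The main obstacle is conceptual rather than computational: because the statement allows non-additive tensor ideals, the indecomposable content of $\mcT$ is not enough, and one must track which direct sums actually occur — this is exactly why $A_{\mcT}$ is indexed by multisets rather than by a single function $f$ as in Theorem \ref{theorem: additive tensor ideals}. The crux is the \emph{free projectives} lemma, which decouples the always-adjustable projective part from the genuinely non-additive $M$-type part whose combinations $A_{\mcT}$ records; the only delicate inputs there are that $M\otimes P$ is a fixed multiple of $P(0)\oplus P(1)$ and that summand-closure lets one both adjoin and remove projective blocks. A minor bookkeeping point, which should be stated explicitly, is the isolation of the improper ideal $\mcC$, distinguished from all proper ideals by containing a simple (or syzygy) object.
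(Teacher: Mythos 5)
Your proof is correct, and it follows the same overall strategy as the paper's: restrict to proper ideals using the fact (from the proof of Theorem \ref{theorem: additive tensor ideals}) that a proper tensor ideal contains only projective and $M$-type indecomposables, then show the projective part of any object is ``free''---it can be adjoined by tensoring (the weak-ideal property) and stripped off by retract-closure---so membership in $\mcT$ is determined exactly by which $M$-type direct sums lie in $\mcT$, i.e.\ by $A_{\mcT}$. The one substantive difference is the mechanism for adjoining projectives. The paper tensors a given $M$-type sum with syzygy modules $\Omega^s V(0)$, using $\Omega^{s}V(r)\otimes M_n(r',\eta)\cong M_n(r+r'+p(s),\eta)\oplus snP(r+r'+p(s+1))$; you tensor with $V(0)\oplus P(0)^{\oplus t}$, using $P(r)\otimes M_n(r',\eta)\cong nP(0)\oplus nP(1)$. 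Your choice is cleaner: a single tensoring adds arbitrarily many copies of \emph{both} $P(0)$ and $P(1)$ while fixing the $M$-part on the nose, whereas the paper's syzygy tensoring (for a fixed even $s$) only produces projectives of parities dictated by the labels $r_j$, so obtaining both parities requires an iteration the paper glosses over. Also, your explicit isolation of the improper ideal is not mere bookkeeping: the assignment $\mcT\mapsto A_{\mcT}$ genuinely fails to be injective on the set of \emph{all} tensor ideals, since the maximal proper additive ideal (the case $f\equiv\infty$ in Theorem \ref{theorem: additive tensor ideals}) and $\Rep(K_2)$ itself both have $A_{\mcT}$ equal to the set of all finite subsets. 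So the corollary is only true as a statement about proper ideals---precisely the hypothesis the paper's proof silently assumes when it writes ``Suppose $\mcT$ and $\mcT'$ are two distinct proper tensor ideals,'' and which you made explicit.
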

\begin{proof}
     We claim this is a complete invariant. Also, note that each $T\in A_{\mcT}$ is finite since each object is a finite direct sum of indecomposables. Suppose $\mcT$ and $\mcT'$ are two distinct proper tensor ideals. Then since $\mcT$ and $\mcT'$ only contain the indecomposables of the form $P(r)$ or $M_n(r,\eta)$, then since for any direct sum of $M_n(r,\eta)$, by tensoring with $\Omega^s V(0)$, we may add arbitrarily many copies of $P(0)$ and $P(1)$. Thus, closure under retracts implies that there must be some element $$\sum\limits_{(n,\eta,k,r)\in A_{\mcT}} kM_n(r,\eta)\in \mcT$$ but not $\mcT'$. However, this in turn implies that $A_{\mcT}\ne A_{\mcT'}$, so this set is a complete invariant. 
\end{proof}

\subsubsection{Quasi-dominated subcategory of $r(K_2)$}

\begin{corollary}
The negligible objects of $\Rep(K_2)$ and $\Rep(DK_1)$  are projectives and $M_n(r,\eta)$.
\end{corollary}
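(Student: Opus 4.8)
The plan is to characterize negligible objects via the modified or categorical trace and show that an indecomposable object is negligible precisely when its identity endomorphism has trace zero, then identify these among the three families of indecomposables. First I would recall that in a finite tensor category an object $X$ is negligible if every endomorphism $f \in \End(X)$ has vanishing trace; since the indecomposables of $\Rep(K_2)$ are $V(r), P(r), \Omega^{\pm s}V(r)$, and $M_n(r,\eta)$, and negligibility is preserved under direct sums and detected on indecomposables, it suffices to treat each family separately. For the simple objects $V(r)$ the identity map has nonzero trace (the categorical dimension of a one-dimensional module is $\pm 1 \neq 0$), so the $V(r)$ are not negligible and any tensor ideal containing them is improper, consistent with the earlier fusion-rule computation showing $V(r) \otimes V(r) = V(0)$.

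Next I would handle the projectives $P(r)$. The key input is that the projective objects form a tensor ideal admitting a modified trace, and more directly that every projective indecomposable in a non-semisimple category has categorical dimension zero, so $\tr(\id_{P(r)}) = 0$; combined with the fact that any endomorphism of $P(r)$ factors in a way forcing vanishing trace, this shows $P(r)$ is negligible. The heart of the argument is the M-type family. Here I would use the fusion rule $M_s(r,\eta) \otimes M_n(r',\eta') = \oplus\, sn\, P(r+r')$ when $\eta \neq \eta'$, together with the duality $M_n(r,\eta)^* = M_n(r+1,\eta)$: tensoring $M_n(r,\eta)$ with its dual yields a purely projective object, which forces the evaluation/coevaluation composite defining $\tr(\id_{M_n(r,\eta)})$ to land in a negligible (projective) object, hence the trace vanishes and $M_n(r,\eta)$ is negligible. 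The same computation works identically in $\Rep(DK_1)$ since the M-type fusion rules and dualities are inherited, and the Steinberg summands $V(2,r)$ are themselves projective and thus negligible there.

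Finally I would verify the reverse inclusion, namely that the Syzygy modules $\Omega^{\pm s}V(r)$ are \emph{not} negligible, so that the list of negligibles is exactly the projectives together with the M-type modules. For this the decisive relation is the one already used in Theorem \ref{theorem: additive tensor ideals}: the product $\Omega^n V(r) \otimes \Omega^{-n}V(r+1) \simeq V(0) \oplus (n^2+n)P(1)$ contains the tensor unit $V(0)$ as a summand. Since the unit is non-negligible and negligibility would be inherited by summands of a tensor product of negligibles (as the negligible morphisms form a tensor ideal), the presence of $V(0)$ shows $\Omega^{\pm s}V(r)$ cannot be negligible. Assembling these cases gives that the negligible indecomposables of both $\Rep(K_2)$ and $\Rep(DK_1)$ are precisely the $P(r)$ and $M_n(r,\eta)$.

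The main obstacle I anticipate is rigorously establishing vanishing of $\tr(\id_{M_n(r,\eta)})$: one must confirm that the categorical trace, computed via the rigidity structure $\coev$ and $\ev$, factors through the projective summand appearing in $M_n(r,\eta) \otimes M_n(r+1,\eta)$ and therefore vanishes. This requires checking that the composite $1 \xrightarrow{\coev} M_n(r,\eta) \otimes M_n(r,\eta)^* \xrightarrow{\ev} 1$ has image in a negligible object, which hinges on the precise form of the duality maps rather than merely the fusion rules; I would reduce this to the observation that any endomorphism of $M_n(r,\eta)$ lifting to $M_n(r,\eta) \otimes M_n(r+1,\eta)$ is supported on projective summands whose traces are zero.
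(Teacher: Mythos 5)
There is a genuine gap, and it sits exactly at the step you yourself flagged as the ``heart of the argument.'' Your treatment of the M-type modules rests on the claim that tensoring $M_n(r,\eta)$ with its dual yields a purely projective object, quoting the fusion rule $M_s(r,\eta)\otimes M_n(r',\eta')=\oplus\, sn\, P(r+r')$. But that rule holds only when $\eta\neq\eta'$, whereas the dual is $M_n(r,\eta)^*=M_n(r+1,\eta)$ --- the \emph{same} parameter $\eta$. By the same-$\eta$ fusion rule in the theorem,
\begin{equation*}
M_n(r,\eta)\otimes M_n(r,\eta)^* \;\simeq\; n(n-1)\,P(1)\,\oplus\, M_n(0,\eta)\,\oplus\, M_n(1,\eta),
\end{equation*}
which is not projective. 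So the trace composite $1 \xrightarrow{\coev} M_n(r,\eta)\otimes M_n(r,\eta)^* \to 1$ does not factor through a projective object, and your vanishing argument collapses. A secondary, smaller issue: your opening reduction ``an indecomposable is negligible precisely when its identity has trace zero'' is not justified --- negligibility requires \emph{every} endomorphism to have vanishing trace, and for non-simple indecomposables (whose endomorphism algebras are local but not one-dimensional) vanishing of $\tr(\id)$ alone does not suffice.

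The conclusion can still be rescued, in two ways. The direct repair: if some $f\in\End(M_n(r,\eta))$ had $\tr(f)\neq 0$, the composite $\ev\circ(f\otimes\id)$ would split $\coev$, exhibiting $V(0)$ as a direct summand of $M_n(r,\eta)\otimes M_n(r,\eta)^*$; by Krull--Schmidt this is impossible, since none of the indecomposable summands displayed above is isomorphic to $V(0)$ (compare dimensions: $4$, $2n$, $2n$ versus $1$). This works for all endomorphisms simultaneously, curing both issues at once. The paper instead argues at the level of ideals: the negligible objects form a tensor ideal that contains every \emph{proper} tensor ideal (if $X$ in a tensor ideal $\mcT$ had an endomorphism of nonzero trace, the unit would be a retract of $X\otimes X^*\in\mcT$, forcing $\mcT=\Rep(K_2)$), and by Theorem \ref{theorem: additive tensor ideals} each $M_n(r,\eta)$ lies in a proper tensor ideal, hence is negligible. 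Your arguments for $V(r)$ (nonzero dimension), for $P(r)$ (trace factors through the projective $P\otimes P^*$, and a nonzero map $1\to P\otimes P^*\to 1$ would make the unit projective), and for the syzygy modules (the retract $V(0)$ inside $\Omega^{n}V(r)\otimes\Omega^{-n}V(r+1)$) are all sound and agree in substance with the paper; only the M-type case needs the repair above.
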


\begin{proof}
Since $K_2$ and $DK_1$ are pivotal Hopf algebras (the element $K$ serves as a pivot in each), the representation categories are pivotal as well. The negligible ideal is a tensor ideal containing any proper tensor ideal, then since $M_n(r,\eta)$ is contained in a proper tensor category, it is negligible. Further, any projective object is negligible. Also, since any tensor ideal with $\Omega^n(V(r))$, $\Omega^{-n}(V(r))$, or $V(r)$ contains the tensor unit, they are not contained in any proper tensor ideal, so they are not negligible. 
\end{proof}

It follows that quasi-dominated subcategory could have uncountably many indecomposables.  This raises doubt about the potential usefulness of quasi-dominated subcategories for the study of non-semisimple modular categories.

\section*{Acknowledgments}
Z.W. is partially supported ARO MURI contract W911NF-20-1-0082. 

\bibliographystyle{abbrv}
\bibliography{references}

\label{VeryLastPage}

\end{document}